\apptocmd{\lim}{\limits}{}{}
\theoremstyle{definition}
\newtheorem{thm}{Theorem}[section]
\newtheorem{theorem}[thm]{Theorem}
\newtheorem{lemma}[thm]{Lemma}
\numberwithin{subcase}{case}
\theoremstyle{definition}
\newtheorem{definition}[thm]{Definition}
\def\forkindep{\mathrel{\raise0.2ex\hbox{\ooalign{\hidewidth$\vert$\hidewidth\cr\raise-0.9ex\hbox{$\smile$}}}}}
\def\Ind{\setbox0=\hbox{$x$}\kern\wd0\hbox to 0pt{\hss$\mid$\hss}
	\lower.9\ht0\hbox to 0pt{\hss$\smile$\hss}\kern\wd0}
\def\Notind{\setbox0=\hbox{$x$}\kern\wd0\hbox to 0pt{\mathchardef
		\nn=12854\hss$\nn$\kern1.4\wd0\hss}\hbox to 0pt{\hss$\mid$\hss}\lower.9\ht0
	\hbox to 0pt{\hss$\smile$\hss}\kern\wd0}
\newcommand{\HC}{\textrm{HC}}
\def\phi{\varphi}
\def\lg{{\rm lg}}
\def\<{\langle}
\def\>{\rangle}
\def\blfootnote{\xdef\@thefnmark{}\@footnotetext}
\begin{document}	

	\bibliographystyle{plain}
	
	\author{Douglas Ulrich\!\!\
	\thanks{Partially supported
by Laskowski's NSF grant DMS-1308546.}\\
Department of Mathematics\\University of California, Irvine}
	\title{A streamlined proof of $\mathfrak{p}=\mathfrak{t}$}
	\date{\today} 
	
	\blfootnote{2010 \emph{Mathematics Subject Classification:} 03C55, 03E17.}
	\blfootnote{\emph{Key Words and Phrase:} Uncountable model theory, cardinal characteristics of the continuum.}
	
	\maketitle
	
	%To-do: ultrapower reference, def of lambda^+ sat,
	
\begin{abstract}
We streamline Malliaris and Shelah's proof \cite{pEqualsTref} that $\mathfrak{p} = \mathfrak{t}$. In particular, we replace cofinality spectrum problems with models of $ZFC^-$, and we eliminate the use of peculiar cuts.
\end{abstract}

\section{Introduction}
In \cite{pEqualsTref}, Malliaris and Shelah introduce the notion of cofinality spectrum problems; these are essentially models of a weak fragment of arithmetic. To each cofinality spectrum problem $\mathbf{s}$ they associate cardinals $\mathfrak{p}_{\mathbf{s}}$ and $\mathfrak{t}_{\mathbf{s}}$, which measure certain saturation properties of $\mathbf{s}$. In their Central Theorem 9.1, they prove that $\mathfrak{t}_{\mathbf{s}} \leq \mathfrak{p}_{\mathbf{s}}$; moreover, under mild conditions on $\mathbf{s}$ (in \cite{pEqualsT2}, they note that exponentiation is sufficient), equality occurs. Malliaris and Shelah then derive two applications of this: first, they prove that $SOP_2$ theories are maximal in Keisler's order, and second, they prove that $\mathfrak{p} = \mathfrak{t}$. The latter application resolves the most longstanding open problem in the theory of cardinal invariants of the continuum, and we give a self-contained treatment in this paper. We discuss the first application in \cite{InterpOrdersUlrich}.

The main difficulty encountered by readers of \cite{pEqualsTref} is in the definition of cofinality spectrum problems; these are rather convoluted objects, but in fact they are not necessary to the proof. All that is needed is some fragment of $ZFC$ with transitive set models. $ZFC^-$ $(ZFC$ without powerset) is convenient for our purposes. The reader comfortable with mild large cardinals should feel free to replace $ZFC^-$ by $ZFC$ (or more). 

A model of $ZFC^-$ is $\omega$-nonstandard if it contains nonstandard natural numbers. To every $\omega$-nonstandard $\hat{V} \models ZFC^-$ we will associate a pair of cardinal invariants $\mathfrak{p}_{\hat{V}}$ and $\mathfrak{t}_{\hat{V}}$. The reader familiar with cofinality spectrum problems may verify that any $\omega$-nonstandard $\hat{V} \models ZFC^-$ determines a cofinality spectrum problem $\mathbf{s}$, and that $\mathfrak{p}_{\hat{V}} = \mathfrak{p}_{\mathbf{s}}$ and $\mathfrak{t}_{\hat{V}} = \mathfrak{t}_{\mathbf{s}}$, following the proof of Claim 10.19 of \cite{pEqualsTref}.

We now give an overview of our proof that $\mathfrak{p}= \mathfrak{t}$. First, in Section~\ref{pVLeqTvSec}, we show that for every $\omega$-nonstandard $\hat{V} \models ZFC^-$, $\mathfrak{p}_{\hat{V}} \leq \mathfrak{t}_{\hat{V}}$; Malliaris and Shelah prove this in \cite{pEqualsTref} in the context of ultrapower embeddings, and in \cite{pEqualsT2} they note that it holds for cofinality spectrum problems with exponentiation. We also give a useful condition for when a partial type $p(x)$ over $\hat{V}$ of cardinality less than $\mathfrak{p}_{\hat{V}}$ is realized in $\hat{V}$. Next, in Section~\ref{pVEqualsTvSec} we show that for every $\omega$-nonstandard $\hat{V} \models ZFC^-$, $\mathfrak{p}_{\hat{V}} = \mathfrak{t}_{\hat{V}}$. 

Finally, in Section~\ref{CardInvariants}, we prove $\mathfrak{p}=\mathfrak{t}$, loosely following Malliaris and Shelah: first, note that it follows immediately from the definitions that $\mathfrak{p} \leq \mathfrak{t}$, so we suppose that $\mathfrak{p} < \mathfrak{t}$ to get a contradiction. We are free to suppose that $\mathfrak{t} = 2^{\aleph_0} = 2^{<\mathfrak{t}}$, since we can Levy-collapse $2^{<\mathfrak{t}}$ to $\mathfrak{t}$ without adding sequences of length less than $\mathfrak{t}$. We are then able to construct a sufficiently generic ultrafilter $\mathcal{U}$ on $\mathcal{P}(\omega)$, such that if we set $\hat{V} = V^\omega/\mathcal{U}$ for some or any transitive $V \models ZFC^-$, then $\mathfrak{p}_{\hat{V}} \leq \mathfrak{p}$ and $\mathfrak{t} \leq \mathfrak{t}_{\hat{V}}$. This contradicts our earlier result that $\mathfrak{p}_{\hat{V}} =\mathfrak{t}_{\hat{V}}$. We manage to avoid reference to a hard theorem of Shelah involving peculiar cuts \cite{ptComment}.

We remark that Moranarocca gives a proof of $\mathfrak{p} = \mathfrak{t}$ in \cite{pEqualsTproof2}, following an unpublished proof sketch of J. Steprans; also, Fremlins has posted a proof on his website \cite{Fremlin}, also based on Stepran's sketch. The main difference from Malliaris and Shelah's proof is that Stepran replaces cofinality spectrum problems by ultrapower embeddings. We prefer working with models of set theory, since the ultrapower machinery introduces unneeded notational overhead. Both of these proofs \cite{pEqualsTproof2} \cite{Fremlin} use peculiar cuts.

\section{$\mathfrak{p}_{\hat{V}} \leq \mathfrak{t}_{\hat{V}}$}\label{pVLeqTvSec}
We begin with some formalities. $ZFC^-$ is $ZFC$ without powerset,  but with replacement strengthened to collection, and with choice strengthened to the well-ordering principle; we consider this the standard definition, following \cite{ZFCminus}.

As some notational conventions, $\hat{V}$ will denote a model of $ZFC$. Whenever $\hat{V} \models ZFC^-$, we will identify $HF$ (the hereditarily finite sets) with its copy in $\hat{V}$; for example, we identify each natural number $n < \omega$ with its copy in $\hat{V}$. Other elements of $\hat{V}$ will usually be decorated with a hat, for instance we write $\hat{\omega}$ rather than $(\omega)^{\hat{V}}$; but sometimes readability takes precedence. Given $X \subseteq \hat{V}$, we say that $X$ is an internal subset of $\hat{V}$ if there is some $\hat{X} \in \hat{V}$ such that $X = \{\hat{y} \in \hat{V}: \hat{y} \hat{\in} \hat{X}\}$. In this case, we identify $X$ with $\hat{X}$ and will write that $X \in \hat{V}$. 

We say that $\hat{V}$ is $\omega$-standard, or is an $\omega$-model, if $\hat{\omega} = \omega$ (i.e. every natural number of $\hat{V}$ has finitely many predecessors). $\hat{V}$ will only ever denote non $\omega$-models. We say that $X \subseteq \hat{V}$ is pseudofinite if there is some $\hat{X} \in \hat{V}$, finite in the sense of $\hat{V}$, such that $X \subseteq \hat{X}$. Thus if $\hat{X} \in \hat{V}$, then $\hat{X}$ is pseudofinite if and only if it is finite in the sense of $\hat{V}$.

We now make the key definitions.
\begin{definition}
	If $(L, <)$ is a linear order, and $\kappa, \theta$ are infinite regular cardinals, then a $(\kappa, \theta)$-pre-cut in $L$ is a pair of sequences $(\overline{a}, \overline{b}) = (a_\alpha: \alpha < \kappa)$, $(b_\beta: \beta < \theta)$ from $L$, such that for all $\alpha < \alpha'$, $\beta < \beta'$, we have $a_\alpha < a_{\alpha'} < b_{\beta'} < b_{\beta}$. $(\overline{a}, \overline{b})$ is a cut if there is no $c \in L$ with $a_\alpha < c < b_\beta$ for all $\alpha, \beta$. Let the cut spectrum of $(L, <)$ be $\mathcal{C}(L, <) := \{(\kappa, \theta): L \mbox{ admits a } (\kappa, \theta) \mbox{ cut}\}$. Define $\mbox{cut}(L, <) = \mbox{min}\{\kappa + \theta:  (\kappa, \theta) \in \mathcal{C}(L, <)\}$.
	
	By a tree $T$ we mean a partially ordered set $(T, <)$ with meets and a minimum element $0_T$, such that the predecessors of every element are linearly-ordered. Given a tree $(T, <)$ define $\mbox{tree-tops}(T)$ to be the least (necessarily regular) $\kappa$ such that there is an increasing sequence $(s_\alpha: \alpha < \kappa)$ from $T$ with no upper bound in $T$.
	
	Suppose $\hat{V}$ is an $\omega$-nonstandard model of $ZFC^-$. Then define $\mathcal{C}_{\hat{V}}= \mathcal{C}(\hat{\omega}, \hat{<})$, and define  $\mathfrak{p}_{\hat{V}} = \mbox{cut}(\hat{\omega}, \hat{<})$. Also, let $\mathfrak{t}_{\hat{V}}$ be the minimum over all $\hat{n} < \hat{\omega}$ of $\mbox{tree-tops}(\hat{n}^{<\hat{n}}, \hat{\subset})$. 
\end{definition}

Unraveling the definitions, $\mathbf{t}_{\hat{V}}$ is the least $\kappa$ such that there is some $\hat{n}< \hat{\omega}$ and some increasing sequence $(\hat{s}_\alpha: \alpha < \kappa)$ from $\hat{n}^{<\hat{n}}$, with no upper bound in $\hat{n}^{<\hat{n}}$. Equivalently, $\mathbf{t}_{\hat{V}}$ is the least $\kappa$ such that there is some $\hat{n}< \hat{\omega}$ and some increasing sequence $(\hat{s}_\alpha: \alpha < \kappa)$ from $\hat{n}^{<\hat{n}}$, with no upper bound in $\hat{\omega}^{<\hat{\omega}}$; this is because if $\hat{s}$ is any upper bound, then $\hat{s} \restriction_{\hat{m}}$ is an upper bound in $\hat{n}^{<\hat{n}}$, where $\hat{m}$ is the largest number below $\hat{n}$ so that $\hat{s} \restriction_{\hat{m}} \in \hat{n}^{\hat{m}}$.

The following lemma is a component of Shelah's proof in \cite{ShelahIso} that SOP theories are maximal in Keisler's order. It need not hold for cofinality spectrum problems. In Section 10 of \cite{pEqualsTref}, Malliaris and Shelah derive the lemma in the context of ultrapower embeddings, following \cite{ShelahIso}. In \cite{pEqualsT2}, Malliaris and Shelah comment that cofinality spectrum problems with exponentiation are enough.

\begin{lemma}\label{pLeqT}
	Suppose $\hat{V} \models ZFC^-$ is $\omega$-nonstandard. Then 
	$\mathfrak{p}_{\hat{V}} \leq \mathfrak{t}_{\hat{V}}$. In fact, $(\mathfrak{t}_{\hat{V}}, \mathfrak{t}_{\hat{V}}) \in \mathcal{C}_{\hat{V}}$.
\end{lemma}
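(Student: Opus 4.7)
Let $\kappa := \mathfrak{t}_{\hat{V}}$. By definition, I would fix $\hat{n} < \hat{\omega}$ and a strictly $\subsetneq$-increasing sequence $(\hat{s}_\alpha : \alpha < \kappa)$ from $\hat{n}^{<\hat{n}}$ with no upper bound in $\hat{n}^{<\hat{n}}$; write $\hat{\ell}_\alpha = \lg(\hat{s}_\alpha)$, so the $\hat{\ell}_\alpha$'s are strictly increasing in $\hat{\omega}$ with $\hat{\ell}_\alpha < \hat{n}$ for every $\alpha$. The plan is to manufacture directly from this sequence a $(\kappa,\kappa)$-cut in $(\hat{\omega}, \hat{<})$, which yields $(\mathfrak{t}_{\hat{V}}, \mathfrak{t}_{\hat{V}}) \in \mathcal{C}_{\hat{V}}$ and hence $\mathfrak{p}_{\hat{V}} \leq \mathfrak{t}_{\hat{V}}$.

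Working inside $\hat{V}$, let $\hat{N} := \hat{n}+2$ and represent each natural number below $\hat{N}^{\hat{n}}$ by its length-$\hat{n}$ base-$\hat{N}$ expansion. For each $\hat{s} \in \hat{n}^{<\hat{n}}$ of length $\hat{\ell}$, define $\hat{a}_{\hat{s}}$ to be the number whose first $\hat{\ell}$ digits are $\hat{s}(0)+1, \ldots, \hat{s}(\hat{\ell}-1)+1$ (each lying in $\{1,\ldots,\hat{n}\}$) and whose remaining digits are all $0$; define $\hat{b}_{\hat{s}}$ the same way except that the padding uses $\hat{N}-1 = \hat{n}+1$. Put $\hat{a}_\alpha := \hat{a}_{\hat{s}_\alpha}$ and $\hat{b}_\alpha := \hat{b}_{\hat{s}_\alpha}$; these are internal elements of $\hat{\omega}$. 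The key observation is that $[\hat{a}_{\hat{s}}, \hat{b}_{\hat{s}}]$ is exactly the set of length-$\hat{n}$ base-$\hat{N}$ numbers whose first $\hat{\ell}$ digits equal $\hat{s}(\cdot)+1$. When $\hat{s}_\alpha \subsetneq \hat{s}_{\alpha'}$, the new digits fall in the strict middle $\{1,\ldots,\hat{n}\}$ of the alphabet $\{0,\ldots,\hat{N}-1\}$, which strictly enlarges $\hat{a}_\alpha$ (its padding $0$'s get replaced by positive digits) and strictly shrinks $\hat{b}_\alpha$ (its padding $\hat{N}-1$'s get replaced by smaller digits); a leading-digit comparison likewise yields $\hat{a}_{\alpha'} \hat{<} \hat{b}_\alpha$ for arbitrary $\alpha, \alpha'$. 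Hence $((\hat{a}_\alpha), (\hat{b}_\alpha))$ is a $(\kappa,\kappa)$-pre-cut.

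It remains to show there is no filler. Suppose $\hat{c} \in \hat{\omega}$ satisfies $\hat{a}_\alpha \hat{<} \hat{c} \hat{<} \hat{b}_\alpha$ for every $\alpha < \kappa$, and let $c_0, \ldots, c_{\hat{n}-1}$ be its base-$\hat{N}$ digits, computed in $\hat{V}$. The interval characterization of $[\hat{a}_\alpha, \hat{b}_\alpha]$ forces $c_i = \hat{s}_\alpha(i)+1 \in \{1,\ldots,\hat{n}\}$ whenever $i < \hat{\ell}_\alpha$. Define $\hat{t} \in \hat{V}$ internally by $\hat{t}(i) = c_i - 1$ if $c_i \in \{1,\ldots,\hat{n}\}$ and $\hat{t}(i) = 0$ otherwise; then $\hat{t} \in \hat{n}^{\hat{n}}$ and $\hat{t} \restriction \hat{\ell}_\alpha = \hat{s}_\alpha$ for every $\alpha$. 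Since each $\hat{\ell}_\alpha \leq \hat{n}-1$, the internal element $\hat{t} \restriction (\hat{n}-1) \in \hat{n}^{<\hat{n}}$ is an upper bound for $(\hat{s}_\alpha)$, contradicting the choice of the sequence. The only step requiring care is the encoding: a naive base-$\hat{n}$ construction gives merely weak monotonicity of $(\hat{a}_\alpha)$ and $(\hat{b}_\alpha)$, because extending $\hat{s}$ by all-$0$ or all-$(\hat{n}-1)$ digits leaves one endpoint unchanged. Enlarging the alphabet by two letters and reserving the extremes exclusively for padding is the minimal modification that separates the roles of ``shape'' and ``padding'' digits and forces both endpoints to move strictly at every extension.
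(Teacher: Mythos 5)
Your proof is correct and follows essentially the same route as the paper: both arguments squeeze the cone above each $s_\alpha$ between two markers (the paper uses $s_\alpha\,^\frown(0)$ and $s_\alpha\,^\frown(\hat{n}_*-1)$ in the lexicographic order; you use the base-$(\hat{n}+2)$ endpoints $\hat{a}_\alpha, \hat{b}_\alpha$), so that unboundedness of the chain turns the resulting $(\kappa,\kappa)$-pre-cut into a genuine cut witnessing $(\mathfrak{t}_{\hat{V}},\mathfrak{t}_{\hat{V}})\in\mathcal{C}_{\hat{V}}$. The only difference is in implementation: the paper transfers the lex-order cut into $(\hat{\omega},\hat{<})$ via the internal order-preserving bijection $\hat{\sigma}$, whereas you build the order-embedding explicitly by digit coding and verify there is no filler by decoding a hypothetical one into an upper bound of $(\hat{s}_\alpha:\alpha<\kappa)$.
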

\begin{proof}
	Suppose $(s_\alpha: \alpha < \kappa)$ is an increasing sequence from $\hat{n_*}^{<\hat{n_*}}$ with no upper bound, where $\kappa$ is regular. We show $(\kappa, \kappa) \in \mathcal{C}_{\hat{V}}$.
	
	Let $\hat{<}_{lex}$ be the lexicographic ordering on $\hat{n}_*^{<\hat{n}_*}$.
	
	Note that if $s \in \hat{T}$, then $s_\alpha \,^\frown(0) \, \hat{\leq}_{lex} \, s \hat{\leq}_{lex} s_\alpha \,^\frown(\hat{n}_*-1)$ if and only if $s_\alpha \subseteq s$. Since $(s_\alpha: \alpha < \kappa)$ is unbounded, it follows that $(s_\alpha\,^\frown(0): \alpha < \kappa)$ and $(s_\alpha\,^\frown(\hat{n}_*-1): \alpha < \kappa)$ form a $(\kappa, \kappa)$-cut in $(\hat{n}_*^{<\hat{n}_*}, (\hat{<}_{lex})_*)$.
	
	In $\hat{V}$, let $\hat{\sigma}: (\hat{n}_*^{<\hat{n}_*}, \hat{<}_{lex}) \to (|\hat{n}_*^{<\hat{n}_*}|, \hat{<})$ be the order preserving bijection. Then $(\hat{\sigma}(s_\alpha\,^\frown(0)): \alpha < \kappa)$ and $(\hat{\sigma}(s_\alpha\,^\frown(\hat{n}_*-1)): \alpha < \kappa)$ witness that $(\kappa, \kappa) \in \mathcal{C}(\hat{\omega}, \hat{V})$.
\end{proof}

The following corresponds to Claim 2.14 of \cite{pEqualsTref}.

\begin{lemma}\label{definableTreeTops}
	Suppose $\hat{V} \models ZFC^-$ is $\omega$-nonstandard.
	Suppose $(\hat{T}, \hat{<})$ is a pseudofinite tree in $\hat{V}$. Then $\mbox{tree-tops}(\hat{T}, \hat{<}) \geq \mathfrak{t}_{\hat{V}}$.
\end{lemma}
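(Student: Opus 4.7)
The plan is to reduce the pseudofinite tree $(\hat{T},\hat{<})$ to the standard form $\hat{n}^{<\hat{n}}$ by finding an internal tree embedding $\hat{f}\colon \hat{T}\to \hat{n}^{<\hat{n}}$ (for some $\hat{n}<\hat{\omega}$) whose image is downward closed, and then transferring any putatively bounded unbounded increasing sequence back to $\hat{T}$.

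First I would construct $\hat{f}$ internally in $\hat{V}$. Since $\hat{T}$ is pseudofinite, in the sense of $\hat{V}$ it is a finite tree with some cardinality $\hat{N}$, a root $0_{\hat{T}}$, and, for each non-root $\hat{t}$, a well-defined immediate predecessor $\hat{t}^-$. Fix inside $\hat{V}$ a linear ordering of $\hat{T}$, and for each non-root $\hat{t}$ let $\hat{c}(\hat{t})$ be its rank among the immediate successors of $\hat{t}^-$. Set $\hat{n}=\hat{N}+1$ and define $\hat{f}$ recursively by $\hat{f}(0_{\hat{T}})=\langle\rangle$ and $\hat{f}(\hat{t})=\hat{f}(\hat{t}^-)\,{}^\frown\langle\hat{c}(\hat{t})\rangle$. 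Then $\hat{f}$ is an internal injective map, $\hat{t}_1\,\hat{<}\,\hat{t}_2$ iff $\hat{f}(\hat{t}_1)\hat{\subsetneq}\hat{f}(\hat{t}_2)$, and the image $\hat{f}(\hat{T})$ is downward closed in $\hat{n}^{<\hat{n}}$ because if $\hat{f}(\hat{t})\hat{=}\hat{s}\,{}^\frown\langle c\rangle$ then by construction $\hat{s}=\hat{f}(\hat{t}^-)$.

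Second, let $(\hat{t}_\alpha:\alpha<\kappa)$ be any strictly increasing sequence in $\hat{T}$ without an upper bound in $\hat{T}$; I claim $(\hat{f}(\hat{t}_\alpha):\alpha<\kappa)$ has no upper bound in $\hat{n}^{<\hat{n}}$. Suppose some $\hat{s}\in\hat{n}^{<\hat{n}}$ dominates every $\hat{f}(\hat{t}_\alpha)$. Let $\hat{i}^{*}$ be the largest $\hat{i}\leq\lg(\hat{s})$ with $\hat{s}\hat{\restriction}\hat{i}\in\hat{f}(\hat{T})$; this exists internally since $\hat{s}\hat{\restriction}0=\langle\rangle\in\hat{f}(\hat{T})$. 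For each $\alpha$, $\hat{f}(\hat{t}_\alpha)\in\hat{f}(\hat{T})$ is an initial segment of $\hat{s}$, so downward closure forces $\lg(\hat{f}(\hat{t}_\alpha))\leq\hat{i}^{*}$ and hence $\hat{f}(\hat{t}_\alpha)\hat{\subseteq}\hat{s}\hat{\restriction}\hat{i}^{*}$. Then $\hat{f}^{-1}(\hat{s}\hat{\restriction}\hat{i}^{*})\in\hat{T}$ is an upper bound for $(\hat{t}_\alpha)$, a contradiction. Thus from any unbounded increasing sequence of length $\kappa$ in $\hat{T}$ we obtain one in $\hat{n}^{<\hat{n}}$, giving $\kappa\geq\mathfrak{t}_{\hat{V}}$, which is what is needed.

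The only genuine step of care is the construction of $\hat{f}$ with downward-closed image; everything else is a short transfer argument. The reason this is not a serious obstacle is that the entire construction takes place internally to $\hat{V}$, so one gets recursion on finite trees for free from $ZFC^-$, and the standard trick of coding a node by the sequence of sibling-ranks along its ancestor chain works without modification.
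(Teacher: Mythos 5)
Your proof is correct and follows essentially the same route as the paper: internally replace $\hat{T}$ by an isomorphic (downward-closed) subtree of $\hat{n}^{<\hat{n}}$ for some $\hat{n}<\hat{\omega}$, and then use the internal maximality of the largest initial segment of a bound $\hat{s}$ lying in (the image of) $\hat{T}$ to transfer bounds back, which is exactly the paper's restriction argument phrased contrapositively. The only difference is that you spell out the sibling-rank coding of the embedding, which the paper leaves implicit.
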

\begin{proof}
	There is in $\hat{V}$ a subtree of $\hat{\omega}^{<\hat{\omega}}$ which is isomorphic to $\hat{T}$; so we can suppose that $\hat{T}$ is a subtree of $\hat{\omega}^{<\hat{\omega}}$. Then $\hat{T}$ is a subtree of $\hat{n}_*^{<\hat{n}_*}$ for some $\hat{n}_* < \hat{\omega}$.
	
	Now suppose $(s_\alpha: \alpha < \kappa)$ is an increasing sequence from $\hat{T}$  with $\kappa < \mathfrak{t}_{\hat{V}}$; we show there is an upper bound in $\hat{T}$. To see this let $s_+$ be an upper bound of $(s_\alpha: \alpha < \kappa)$ in $\hat{\omega}^{<\hat{\omega}}$, and let $\hat{n}$ be largest so that $s_+ \restriction_{\hat{n}} \, \hat{\in} \, \hat{T}$; and let $s = s_+ \restriction_{\hat{n}}$. 
\end{proof}

The following theorem corresponds to Theorem 4.1 of \cite{pEqualsTref}, although there the authors must also assume $\lambda < \mathfrak{t}_{\hat{V}}$ in the absence of Lemma~\ref{pLeqT}. Note that since models of $ZFC^-$ admit pairing functions, there is no loss in only considering types in a single variable, in which each formula has only a singleton parameter.

\begin{theorem}\label{localSaturation}
	Suppose $\hat{V} \models ZFC^-$ is $\omega$-nonstandard.
	Suppose  $p(x)= (\phi_\alpha(x, a_\alpha): \alpha < \lambda)$ is a partial type over $\hat{V}$ of cardinality $\lambda < \mathfrak{p}_{\hat{V}}$. Suppose $\hat{X} \in \hat{V}$ is pseudofinite, and $\phi_0(x)$ is $``x \in \hat{X}$." Then $p(x)$ is realized in $\hat{V}$.
\end{theorem}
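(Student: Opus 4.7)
The plan is to argue by induction on $\lambda$. The case $\lambda$ finite follows immediately from the finite intersection property of partial types, so assume $\lambda$ is infinite. By Lemma~\ref{pLeqT}, $\lambda < \mathfrak{p}_{\hat{V}} \leq \mathfrak{t}_{\hat{V}}$, so the tree-tops tools of Lemma~\ref{definableTreeTops} are available.

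Fix an internal bijection $\hat{X} \leftrightarrow \hat{N}$ with $\hat{N} = |\hat{X}|$, and identify each $\hat{Y}_\alpha = \phi_\alpha^{\hat{V}}(\cdot, a_\alpha) \cap \hat{X}$ with a pseudofinite subset of $\hat{N}$. I work inside the pseudofinite injective tree $\hat{T} = \{\hat{s} \in \hat{N}^{<\hat{N}} : \hat{s} \text{ injective}\} \in \hat{V}$, ordered by initial segment, and construct by transfinite recursion an $\subseteq$-increasing sequence $(\hat{s}_\alpha : \alpha < \lambda)$ in $\hat{T}$ satisfying the invariant $\im(\hat{s}_\alpha) \supseteq \hat{N} \setminus \hat{Y}_\alpha$. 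Once this is built, a final application of Lemma~\ref{definableTreeTops} to $(\hat{s}_\alpha : \alpha < \lambda)$ in $\hat{T}$ yields an upper bound $\hat{s}^* \in \hat{T}$; since $|\hat{s}^*| < \hat{N}$, the complement $\hat{N} \setminus \im(\hat{s}^*)$ is nonempty, and any $\hat{c}$ in it satisfies $\hat{c} \notin \im(\hat{s}_\alpha) \supseteq \hat{N} \setminus \hat{Y}_\alpha$ for every $\alpha$, hence $\hat{c} \in \hat{Y}_\alpha$ for all $\alpha$, realizing $p$.

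At each stage $\alpha$ of the recursion, I first apply the inductive hypothesis to the extended partial type $p_\alpha = \{\phi_\beta : \beta \leq \alpha\} \cup \{x \notin \im(\hat{s}_\beta) : \beta < \alpha\}$, of cardinality less than $\lambda$, to obtain a \emph{reserved} element $\hat{c}_\alpha \in \bigcap_{\beta \leq \alpha} \hat{Y}_\beta$ that lies outside every $\im(\hat{s}_\beta)$ for $\beta < \alpha$. For successor $\alpha$, $\hat{s}_\alpha$ is obtained by appending to $\hat{s}_{\alpha-1}$ the internal sorted enumeration of $(\hat{N} \setminus \hat{Y}_\alpha) \setminus \im(\hat{s}_{\alpha-1})$. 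For limit $\alpha$, Lemma~\ref{definableTreeTops} is applied inside the pseudofinite subtree $\{\hat{s} \in \hat{T} : \hat{c}_\alpha \notin \im(\hat{s})\}$ (to which every earlier $\hat{s}_\beta$ belongs by choice of $\hat{c}_\alpha$), yielding a tree-tops upper bound $\hat{s}^*_\alpha$ which is then extended by appending the internal sorted enumeration of $(\hat{N} \setminus \hat{Y}_\alpha) \setminus \im(\hat{s}^*_\alpha)$. Since $\hat{c}_\alpha \in \hat{Y}_\alpha$, the reserved element is not added during appending, so $\hat{c}_\alpha \notin \im(\hat{s}_\alpha)$ and $|\hat{s}_\alpha| < \hat{N}$, keeping $\hat{s}_\alpha \in \hat{T}$.

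The hard part is verifying the FIP of the extended partial type $p_\alpha$, which is needed to invoke the inductive hypothesis. A finite subset $\{\phi_{\beta_i}\} \cup \{x \notin \im(\hat{s}_{\gamma_j})\}$ is realized by $\hat{c}_{\beta^*}$ from an earlier stage with $\beta^* = \max(\{\beta_i\} \cup \{\gamma_j + 1\})$ whenever $\beta^* < \alpha$; the edge case $\beta^* = \alpha$ is handled by combining the FIP of the original type $p$ with the invariant $\im(\hat{s}_{\gamma_j}) \supseteq \hat{N} \setminus \hat{Y}_{\gamma_j}$, which forces $\hat{N} \setminus \im(\hat{s}_{\gamma_j}) \subseteq \hat{Y}_{\gamma_j}$ and allows a realizer of the finite subset to be extracted. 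This bookkeeping is the core content of the inductive step; once it is established the rest of the construction proceeds mechanically.
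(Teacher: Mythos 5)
There is a genuine gap at the limit stages, and it is exactly the point where the paper has to work hardest. Lemma~\ref{definableTreeTops} only gives you \emph{some} upper bound $\hat{s}^*_\alpha$ in the subtree $\{\hat{s}\in\hat{T}:\hat{c}_\alpha\notin\im(\hat{s})\}$, with no control on its length or image: it may well satisfy $\im(\hat{s}^*_\alpha)=\hat{N}\setminus\{\hat{c}_\alpha\}$. After that, $\hat{N}\setminus\im(\hat{s}_\alpha)=\{\hat{c}_\alpha\}$, and at stage $\alpha+1$ the auxiliary type $p_{\alpha+1}$ contains $\{\phi_{\alpha+1}(x,a_{\alpha+1}),\,x\notin\im(\hat{s}_\alpha)\}$, which is satisfiable only if $\hat{c}_\alpha\in\hat{Y}_{\alpha+1}$ --- and $\hat{c}_\alpha$ was chosen with no knowledge of $\hat{Y}_{\alpha+1}$, so this can simply fail. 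Your ``edge case'' argument does not repair this: the invariant gives $\hat{N}\setminus\im(\hat{s}_{\gamma_j})\subseteq\hat{Y}_{\gamma_j}$, which constrains \emph{where} the complement of the image lies, but what you need is the converse-flavored statement that the complement still \emph{meets} $\bigcap_i\hat{Y}_{\beta_i}$ for the new formulas, and neither the FIP of $p$ nor the invariant gives that. In short, the growing image permanently removes points from the witness pool, a single reserved point cannot protect against formulas you have not yet seen, and nothing in the construction bounds how much an arbitrary tree-tops upper bound swallows.

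It is telling that your argument never uses the hypothesis $\lambda<\mathfrak{p}_{\hat{V}}$ except through $\mathfrak{p}_{\hat{V}}\leq\mathfrak{t}_{\hat{V}}$: the cut-filling property is precisely the tool the paper uses to tame the upper bound at limit stages. The paper's encoding is different in a structurally important way: it builds a (not necessarily injective) sequence in $\hat{X}^{<\hat{n}_*}$ whose conditions are \emph{tail} conditions --- from position $\hat{n}_\beta$ onward all entries satisfy $\phi_\beta$ --- and reads off the realizer as the last entry at the end. With that encoding, for each $\beta$ the greatest length $\hat{m}_\beta$ up to which the upper bound $s_+$ remains good for $\phi_\beta$ is internally definable; the $\hat{m}_\beta$ form a decreasing sequence above the increasing lengths $\hat{n}_\alpha$, and since $\lambda<\mathfrak{p}_{\hat{V}}$ this pre-cut is not a cut, so $s_+$ can be truncated into the gap, restoring the invariant. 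Moreover, past commitments never shrink the pool of future witnesses, since entries may repeat. To salvage your image-complement approach you would have to show that a ``short enough'' upper bound can always be chosen, and making that precise leads you back to the same truncation-plus-$\mathfrak{p}_{\hat{V}}$ argument; as written, the recursion cannot be continued past an unlucky limit stage.
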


\begin{proof}
	Obviously this is true when $\lambda$ is finite.
	
	Suppose the lemma is true for all $\lambda' < \lambda$; we show it is true for $\lambda$. This suffices. Write $\hat{n}_* = |\hat{X}|$.
	
	We choose $(s_\alpha: \alpha \leq \lambda)$ an increasing sequence from $\hat{X}^{<\hat{n}_*}$, such that if we let $\hat{n}_\alpha = \hat{\lg}(s_\alpha)$, then for all $\beta < \alpha < \lambda$ and for all $\hat{n}_\beta \leq \hat{n} < \hat{n}_\alpha$, $\hat{V} \models \phi_\beta(s_\alpha(\hat{n}), a_\beta)$. Obviously then $s_\lambda(\hat{n}_\lambda - 1)$ will realize $p(x)$.
	
	Let $s_0 = \emptyset$. At successor stage $\alpha$, just use the hypothesis for $\lambda' = |\alpha| < \lambda$.
	
	Suppose we have defined $(s_\alpha: \alpha < \delta)$ where $\delta \leq \lambda$. Using $|\delta| < \mathfrak{p}_{\hat{V}} \leq \mathfrak{t}_{\hat{V}}$ (by Lemma~\ref{pLeqT}), we may apply Lemma~\ref{definableTreeTops} to choose $s_+ \in \hat{X}^{<\hat{n}_*}$, an upper bound of $(s_\alpha: \alpha < \delta)$.
	
	Let $\hat{m}_0 = \hat{\lg}(s_+)$. For $\beta \leq \delta$ we will define $\hat{m}_\beta$ so that for all $\alpha < \delta$, and for all $\beta < \beta' < \delta$, $\hat{n}_\alpha < \hat{m}_{\beta'} < \hat{m}_\beta$, and further for every $\beta \leq \delta$, we have that for every $\beta' < \beta$ and for every $\hat{n}_{\beta'} \leq \hat{n} < \hat{m}_\beta$, $\hat{V} \models \phi_{\beta'}(s_+(\hat{n}), a_{\beta'})$. Note once we finish we can set $s_\delta = s_+ \restriction_{\hat{m}_\delta}$.
	
	Having defined $\hat{m}_\beta$ for $\beta < \delta$, let $\hat{m}_{\beta+1}$ be the greatest $\hat{m} < \hat{m}_\beta$ such that for all $\hat{n} < \hat{m}$, $\hat{V} \models \phi_\beta(s_+(\hat{n}), a_\beta)$; this works. Having defined $\hat{m}_\beta$ for all $\beta < \delta' \leq \delta$, since $\delta' \leq \delta \leq \lambda < \mathfrak{p}_{\hat{V}}$ we can choose $\hat{m}_{\delta'}$ with $\hat{n}_\alpha < \hat{m}_{\delta'} < \hat{m}_{\beta}$ for all $\alpha < \delta$, $\beta < \delta'$.
	
	This concludes the construction.
\end{proof}

\section{$\mathfrak{p}_{\hat{V}} = \mathfrak{t}_{\hat{V}}$}\label{pVEqualsTvSec}

In this section, we prove the following theorem. It corresponds to Central Theorem 9.1 of \cite{pEqualsTref}.

\begin{theorem}\label{pVEqualsTv}Suppose $\hat{V} \models ZFC^-$ is $\omega$-nonstandard. Then $\mathfrak{p}_{\hat{V}} = \mathfrak{t}_{\hat{V}}$.
\end{theorem}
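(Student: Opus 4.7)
Lemma~\ref{pLeqT} gives $\mathfrak{p}_{\hat{V}} \leq \mathfrak{t}_{\hat{V}}$, so I aim to prove $\mathfrak{t}_{\hat{V}} \leq \mathfrak{p}_{\hat{V}}$. Arguing by contradiction, suppose $\mathfrak{p}_{\hat{V}} < \mathfrak{t}_{\hat{V}}$ and fix a witnessing $(\kappa,\theta)$-cut $(\bar{a},\bar{b}) = ((a_\alpha)_{\alpha<\kappa}, (b_\beta)_{\beta<\theta})$ in $\hat{\omega}$ with $\kappa+\theta = \mathfrak{p}_{\hat{V}}$. Since $\kappa$ and $\theta$ are each regular, $\mathfrak{p}_{\hat{V}} = \max(\kappa,\theta)$ is regular; assume without loss of generality $\theta = \mathfrak{p}_{\hat{V}}$.

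The plan is to construct, for some pseudofinite tree $\hat{T}\in\hat{V}$, a strictly increasing chain $(\hat{s}_\gamma : \gamma < \theta)$ in $\hat{T}$ with no upper bound in $\hat{T}$. Since $\theta < \mathfrak{t}_{\hat{V}}$, this will contradict Lemma~\ref{definableTreeTops}. For $\hat{T}$, I take the end-extension tree of strictly decreasing pseudofinite sequences in $\{0,1,\ldots,b_0\}$ of length at most a pseudofinite $\hat{k}_*$; this is $\hat{V}$-finite because both entries and lengths are pseudofinitely bounded. Fix external bookkeeping maps $\phi\colon\theta\to\hat{k}_*$ strictly increasing (concretely, one may take $\phi(\delta) = b_0 - b_\delta$ after adjusting $\hat{k}_*$) and $\sigma\colon\theta\to\kappa$ cofinal. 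At stage $\gamma$ I invoke Theorem~\ref{localSaturation} on the partial type asserting that $\hat{s}_\gamma \in \hat{T}$ end-extends each previously built $\hat{s}_\beta$ ($\beta<\gamma$), that $\hat{s}_\gamma(\phi(\delta)) = b_\delta$ for every $\delta<\gamma$, and that every entry of $\hat{s}_\gamma$ exceeds $a_\alpha$ for each $\alpha$ in a bookkeeping-controlled subset of $\kappa$. The subset is chosen so the whole type has cardinality $<\theta$, and its solution set is pseudofinite, so Theorem~\ref{localSaturation} applies. An upper bound $\hat{s}^* \in \hat{T}$ of the resulting chain would then admit a coordinate $i^*$ past every $\phi(\delta)$, and $\hat{s}^*(i^*)$ would lie strictly below every $b_\beta$ (by the monotone decrease of $\hat{s}^*$ beyond the recorded positions) and strictly above every $a_\alpha$ (by the universal bounds), filling the cut $(\bar{a},\bar{b})$ --- a contradiction.

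The main obstacle is calibrating the bookkeeping so that at every stage $\gamma$ the lower-bound clauses ``every entry of $\hat{s}_\gamma$ exceeds $a_\alpha$'' remain consistent with the end-extension of the previously constructed $\hat{s}_\beta$. The delicate case is $\kappa = \theta$: naively demanding ``every entry exceeds $a_\alpha$ for all $\alpha<\kappa$'' introduces $\kappa = \theta$ many formulas, exceeding the $<\theta$ size required by Theorem~\ref{localSaturation}. The resolution is to impose only a growing but always strictly-below-$\theta$ subset of these clauses at each stage, while arranging that in the limit every $a_\alpha$ is eventually captured below every coordinate of every sufficiently late $\hat{s}_\gamma$. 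This combinatorial interleaving is the substantive content of the proof, and under the translation from cofinality spectrum problems to $ZFC^-$ it corresponds to Malliaris and Shelah's argument for Central Theorem 9.1 in \cite{pEqualsTref}.
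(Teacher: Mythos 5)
There is a genuine gap, and it sits exactly where you wave your hands. In your tree $\hat{T}$ of strictly decreasing sequences, the clauses ``every entry of $\hat{s}_\gamma$ exceeds $a_\alpha$'' constrain only the coordinates of the chain elements $\hat{s}_\gamma$, while the coordinate $i^*$ you need --- one lying past every recorded position $\phi(\delta)$ --- necessarily lies past every $\lg(\hat{s}_\gamma)$ as well (the lengths interleave with the positions $\phi(\delta)$, since each $\hat{s}_\gamma$ must reach $\phi(\delta)$ for $\delta<\gamma$ but must stop short of the later positions in order to end-extend). Hence $\hat{s}^*(i^*)$ lies in the unconstrained tail of the upper bound, where the only requirement is monotonicity, and nothing prevents it from dropping below $a_0$. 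Conversely, any coordinate inside the constrained region is below some $\phi(\delta)$ and so has value above $b_\delta$. The most your construction yields is this: for each $\alpha$ the internal initial segment $\{i:\hat{s}^*(i)>a_\alpha\}$ ends at some $j_\alpha$ past all $\phi(\delta)$, and either some $i$ lies strictly between all $\phi(\delta)$ and all $j_\alpha$ (then the cut is filled) or else $(\phi(\delta):\delta<\theta),(j_\alpha)$ is itself a $(\theta,\kappa')$-cut. So the argument does not produce a contradiction; it merely converts a $(\kappa,\theta)$-cut into a $(\theta,\kappa')$-cut, which is essentially the content of Theorem~\ref{lcfWellDefined} (exchanging the two sides and controlling lower cofinalities), not of the theorem you are trying to prove.

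The deferred ``bookkeeping interleaving'' cannot rescue this. In the symmetric case $\kappa=\theta$ it is impossible as described: the entries of earlier $\hat{s}_\beta$ are frozen, so you cannot later demand that they exceed newly introduced $a_\alpha$'s; the paper handles this case (Lemma~\ref{noSmallSymCuts}) by a different device, encoding \emph{nested pairs} $(\hat{a}_\alpha,\hat{b}_\alpha)$ into the tree entries so that the internal tree condition itself squeezes every later entry of any upper bound between the two sides. And in the asymmetric case $\kappa<\theta=\mathfrak{p}_{\hat{V}}$, even imposing all $\kappa$ lower-bound clauses at every stage (which the cardinality bound of Theorem~\ref{localSaturation} does permit) still leaves the decisive coordinate in the unconstrained tail, as above. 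This asymmetric case is the real core of Central Theorem 9.1, and the paper's proof of it (Lemma~\ref{KeyLemma}) requires two ingredients absent from your proposal: the uniqueness of the lower cofinality from Theorem~\ref{lcfWellDefined}, used to descend to the chain of lengths, and the anti-Ramsey coloring $g:[\kappa^+]^2\to\kappa$ of Lemma~\ref{Combinatorics}, threaded through a tree whose nodes are triples $(\hat{e}_{\hat{n}},\hat{D}_{\hat{n}},\hat{g}_{\hat{n}})$ squeezing pairs into a shrinking space. Without some substitute for that mechanism, your outline proves at most $(\kappa,\theta)\in\mathcal{C}_{\hat{V}}\Rightarrow(\theta,\kappa')\in\mathcal{C}_{\hat{V}}$, which is far short of $\mathfrak{t}_{\hat{V}}\leq\mathfrak{p}_{\hat{V}}$.
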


Fix $\hat{V}$ for the rest of the section.

We begin with the following theorem; it corresponds to Theorem 3.1 of \cite{pEqualsTref}.

\begin{theorem}\label{lcfWellDefined} Suppose $\kappa < \mbox{min}(\mathfrak{p}_{\hat{V}}^+, \mathfrak{t}_{\hat{V}})$ is regular. Then there is a unique regular cardinal $\lambda$ with $(\kappa, \lambda) \in \mathcal{C}_{\hat{V}}$; moreover this $\lambda$ is also unique with the property that $(\lambda, \kappa) \in \mathcal{C}_{\hat{V}}$.
\end{theorem}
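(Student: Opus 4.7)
The plan has two parts: first establish coordinate symmetry of $\mathcal{C}_{\hat{V}}$, reducing the ``moreover'' clause to the uniqueness statement; then prove uniqueness by deriving a contradiction with $\kappa < \mathfrak{t}_{\hat{V}}$.

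For symmetry: given a $(\kappa, \lambda)$-cut $(\abar, \bbar)$ in $\hat{\omega}$, pick $\hat{n} \hat{\in} \hat{\omega}$ with $\hat{n} \hat{>} b_0$, and apply the internal order-reversing bijection $\sigma: \hat{i} \mapsto \hat{n} - \hat{1} - \hat{i}$ on the pseudofinite segment $\{\hat{0}, \ldots, \hat{n} - \hat{1}\}$. The sequences $(\sigma(b_\beta))_{\beta<\lambda}$ and $(\sigma(a_\alpha))_{\alpha<\kappa}$ form a $(\lambda, \kappa)$-pre-cut in $\hat{\omega}$, and any element $c$ strictly between them would satisfy $c \hat{<} \hat{n}$, so $\sigma(c)$ would lie strictly between $\abar$ and $\bbar$, contradicting that $(\abar, \bbar)$ is a cut. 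Hence $(\kappa, \lambda) \in \mathcal{C}_{\hat{V}}$ iff $(\lambda, \kappa) \in \mathcal{C}_{\hat{V}}$, and the ``moreover'' clause reduces to the first uniqueness statement.

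For uniqueness, suppose for contradiction that $(\kappa, \lambda_1), (\kappa, \lambda_2) \in \mathcal{C}_{\hat{V}}$ with $\lambda_1 < \lambda_2$ both regular. Pick witnesses $(\abar^i, \bbar^i)$ for $i = 1, 2$ with associated Dedekind cuts $(A_i, B_i)$ of $\hat{\omega}$. If $A_1 = A_2$, then $B_1 = B_2$, so their coinitialities coincide and $\lambda_1 = \lambda_2$, a contradiction. Otherwise, without loss of generality $A_1 \subsetneq A_2$, and the middle region $M := B_1 \cap A_2$ is nonempty; tail segments of $(a^2_\alpha)_\alpha$ are $\kappa$-cofinal in $M$ from below, and tail segments of $(b^1_\beta)_\beta$ are $\lambda_1$-coinitial in $M$ from above. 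My strategy from here is to construct, internally to $\hat{V}$, a pseudofinite tree $\hat{T}$ together with a strictly increasing sequence $(\hat{s}_\alpha : \alpha < \kappa)$ in $\hat{T}$ such that any upper bound of $(\hat{s}_\alpha)_\alpha$ in $\hat{T}$ would decode to an element $c \in \hat{\omega}$ realizing the type ``$a^1_\alpha < x$ for all $\alpha$ and $x < b^1_\beta$ for all $\beta$''---that is, an element strictly between the sequences of the $(\kappa, \lambda_1)$-cut, contradicting its cut property. Lemma~\ref{definableTreeTops} together with $\kappa < \mathfrak{t}_{\hat{V}}$ then forces an upper bound to exist, giving the contradiction. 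The sequence $(\hat{s}_\alpha)$ is built inductively, using Theorem~\ref{localSaturation} at each successor stage to realize the partial type (of size $|\alpha| < \kappa \leq \mathfrak{p}_{\hat{V}}$) describing the required extension; the discrepancy $\lambda_1 < \lambda_2$ enters the design of $\hat{T}$ so that the full $\kappa$-sequence's upper bound produces a forbidden interpolant.

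The main obstacle is designing $\hat{T}$: we need a pseudofinite tree internal to $\hat{V}$, coarse enough for Lemma~\ref{definableTreeTops} to apply, yet fine enough that upper bounds necessarily yield interpolants, exploiting the extra ``room'' afforded by $\lambda_1 < \lambda_2$ (so that $(\bbar^2)$ provides an internal target to approach while staying below $(\bbar^1)$). Pinning down this coding---so that the construction actually yields a contradiction rather than a vacuous statement---is the technical heart of the argument.
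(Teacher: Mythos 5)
Your symmetry step (the internal order-reversal $\hat{i} \mapsto \hat{n}-\hat{1}-\hat{i}$ above $b_0$) is correct and does reduce the ``moreover'' clause to uniqueness in the first coordinate; that is a legitimate simplification. But two genuine gaps remain. First, the theorem also asserts \emph{existence} of some regular $\lambda$ with $(\kappa,\lambda) \in \mathcal{C}_{\hat{V}}$, and your proposal never addresses it; the paper gets it by using Lemma~\ref{definableTreeTops} (applied to the linear order $(\hat{n}_*,\hat{<})$ as a tree, with $\kappa < \mathfrak{t}_{\hat{V}}$) to build a strictly increasing $\kappa$-sequence below a nonstandard $\hat{n}_*$, and then taking the coinitiality of the set of its upper bounds.

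Second, and more seriously, the heart of your uniqueness argument is missing by your own admission: after the easy case $A_1=A_2$, everything hinges on an unspecified tree $\hat{T}$, and the mechanism you describe --- that an upper bound of a length-$\kappa$ increasing sequence should ``decode to an interpolant'' of the $(\kappa,\lambda_1)$-cut --- is doubtful on its face. At every stage $\alpha<\kappa$ the two cuts are indistinguishable (both are $(\kappa,\cdot)$-cuts seen from below), so nothing in a $\kappa$-length construction can exploit $\lambda_1<\lambda_2$ to force the upper bound into the gap of cut $1$; if such a construction worked it would equally ``fill'' a cut whose $\lambda$ is unique, which is absurd. The paper's proof uses the upper bound constructively, not for a contradiction of cut-ness: it couples a $(\kappa,\lambda_0)$-cut with a $(\lambda_1,\kappa)$-cut (which, via your reversal trick, is what your second cut becomes) through the tree of pairs $s$ with first coordinates increasing along one $\kappa$-sequence and second coordinates decreasing along the other, built by recursion using $\kappa<\mathfrak{t}_{\hat{V}}$ at limits (and $\kappa\le\mathfrak{p}_{\hat{V}}$ to keep the descending sequence nonstandard). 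Taking an upper bound $s$ of the full $\kappa$-sequence, one then looks at the \emph{positions}: the lengths $\hat{a}_\alpha=\hat{\lg}(s_\alpha)$ form an increasing $\kappa$-sequence below $\hat{\lg}(s)$, and the coinitiality $\lambda$ of the positions above them transfers through the two coordinates of $s$ to show $\lambda_0=\lambda=\lambda_1$. That coupling-and-transfer argument is exactly the content you have deferred, so as it stands the proposal does not prove the theorem.
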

\begin{proof}
	We first show that there exist $\lambda_0, \lambda_1$ regular cardinals with $(\kappa, \lambda_0) \in \mathcal{C}_{\hat{V}}$ and $(\lambda_1, \kappa) \in \mathcal{C}_{\hat{V}}$. We will then show that $\lambda_0 = \lambda_1$, which suffices to prove the theorem.
	
	For $\lambda_0$: pick $\hat{n}_*$ nonstandard, and note that by Lemma~\ref{definableTreeTops} applied to the tree $(\hat{n}_*, \hat{<})$ we can choose $(\hat{n}_\alpha: \alpha < \kappa)$ a strictly increasing sequence below $\hat{n}_*$. Let $(\hat{m}_\beta: \beta < \beta_*)$ be any strictly decreasing sequence in $(\hat{n_*}, \hat{<})$, cofinal above $(\hat{n}_\alpha: \alpha < \kappa)$, and then discard elements to replace $\beta_*$ by $\mbox{cof}(\beta_*) =: \lambda_0$. 
	
	For $\lambda_1$: I claim that we can define $(\hat{n}'_\alpha: \alpha < \kappa)$, a strictly decreasing sequence of nonstandard numbers from $\hat{\omega}$. To see that we can do this: first let $\hat{n}'_0$ be an arbitrary nonstandard natural number. Having defined $\hat{n}'_{\alpha}$, let $\hat{n}'_{\alpha+1} = \hat{n}'_\alpha - 1$. Having defined $\hat{n}'_\alpha$ for all $\alpha < \delta$ where $\delta < \kappa$ is a limit, consider the pre-cut $(n: n < \omega), (\hat{n}'_\alpha: \alpha < \delta)$. Since $\omega + \delta < \kappa \leq \mathfrak{p}_{\hat{V}}$ this cannot be a cut, so choose $\hat{n}'_\delta$ in the gap. Having constructed $\hat{n}'_\alpha$ for each $\alpha < \kappa$, we can as in the previous paragraph choose a regular $\lambda_1$ and a strictly increasing sequence $(\hat{m}'_\gamma: \gamma < \lambda_1)$, cofinal below $(\hat{n}'_\alpha: \alpha < \kappa)$.
	
	Now to show $\lambda_0 = \lambda_1$: first, by possibly increasing $\hat{n}_*$, we can suppose $\hat{n}_* > \hat{n}'_0$, and thus each $\hat{n}_\alpha, \hat{n}'_\alpha, \hat{m}_\beta, \hat{m}'_\beta < \hat{n}_*$. Let $(\hat{T}, \hat{<})$ be the tree of all sequences $s \in (\hat{n}_* \times \hat{n}_*)^{<\hat{n}_*}$, such that that for all $\hat{n} < \hat{m} < \hat{\lg}(s)$, $s(\hat{n})(0) < s(\hat{m})(0) < s(\hat{m})(1) < s(\hat{n})(0)$.
	
	We now choose a strictly increasing sequence $(s_\alpha: \alpha < \kappa)$ from $\hat{T}$ such that for each $\alpha < \kappa$, if we set $\hat{a}_\alpha = \hat{\lg}(s_\alpha)$, then $s_{\alpha}(\hat{a}_\alpha-1) = (\hat{n}_\alpha, \hat{n}'_\alpha)$. Let $s_0 = \emptyset$; having defined $s_\alpha$, let $s_{\alpha+1} = s_\alpha \,^\frown (\hat{n}_{\alpha+1}, \hat{n}'_{\alpha+1})$. Finally, having defined $s_\alpha$ for each $\alpha < \delta$ for $\delta < \kappa$ limit, since $|\delta| < \mathfrak{t}_{\hat{V}}$ we can choose $s_+$ an upper bound for $(s_\alpha: \alpha < \delta)$. Let $\hat{n}$ be greatest so that $s_+(\hat{n})(0) < \hat{n}_\delta$ and $s_+(\hat{n})(1) > \hat{n}'_\delta$; let $s_\delta = s_+ \restriction_{\hat{n}} \,^\frown (\hat{n}_\delta, \hat{n}'_\delta)$.
	
	Since $\kappa < \mathfrak{t}_{\hat{V}}$ we can choose $s$ an upper bound on $(s_\alpha: \alpha < \kappa)$. Choose $\lambda$ regular and $(\hat{b}_\alpha: \alpha < \lambda)$ a strictly decreasing sequence from $\hat{\omega}$, which is cofinal above $(\hat{a}_\alpha: \alpha < \kappa)$, and such that $\hat{b}_0= \hat{\lg}(s)-1$.
	
	Then the sequences $(\hat{m}_\alpha: \alpha < \lambda_0)$ and $(s(\hat{b}_\alpha, 0): \alpha < \lambda)$ are cofinal in each other, so $\lambda_0 = \lambda$; and the sequences $(\hat{m}'_\alpha: \alpha < \lambda_0)$ and $(s(\hat{b}_\alpha, 1): \alpha < \lambda)$ are cofinal in each other, so $\lambda_1 = \lambda$.
\end{proof}

Note that in the following definition, we will eventually be proving that $\min(\mathfrak{p}_{\hat{V}}^+, \mathfrak{t}_{\hat{V}}) = \mathfrak{p}_{\hat{V}} = \mathfrak{t}_{\hat{V}}$.

\begin{definition}
	For $\kappa < \min(\mathfrak{p}_{\hat{V}}^+, \mathfrak{t}_{\hat{V}})$ regular, define $\mbox{lcf}_{\hat{V}}(\kappa)$ to be the unique regular $\lambda$ with $(\kappa, \lambda) \in \mathcal{C}_{\hat{V}}$ (which is also the unique regular $\lambda$ with $(\lambda, \kappa) \in \mathcal{C}_{\hat{V}}$). 
\end{definition}

Note that by definition of $\mathfrak{p}_{\hat{V}}$ there is some $\kappa \leq \mathfrak{p}_{\hat{V}}$ such that either $(\kappa, \mathfrak{p}_{\hat{V}}) \in \mathcal{C}_{\hat{V}}$ or else $(\mathfrak{p}_{\hat{V}}, \kappa) \in \mathcal{C}_{\hat{V}}$. If $\mathfrak{p}_{\hat{V}} < \mathfrak{t}_{\hat{V}}$, then $\kappa = \mbox{lcf}_{\hat{V}}({\mathfrak{p}_{\hat{V}}})$ and thus both occur.

Thus, for the contradiction, it suffices to show if $\mathfrak{p}_{\hat{V}} < \mathfrak{t}_{\hat{V}}$, then for all $\kappa \leq \mathfrak{p}_{\hat{V}}$, we have that $(\kappa, \mathfrak{p}_{\hat{V}}) \not \in \mathcal{C}_{\hat{V}}$.

The following easy case corresponds to Lemma 6.1 of \cite{pEqualsTref}.

\begin{lemma}\label{noSmallSymCuts}
	Suppose $\mathfrak{p}_{\hat{V}} < \mathfrak{t}_{\hat{V}}$. Write $\kappa = \mathfrak{p}_{\hat{V}}$. Then $(\kappa, \kappa) \not \in \mathcal{C}_{\hat{V}}$.
\end{lemma}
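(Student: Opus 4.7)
The plan is to reuse the tree-of-pairs construction from the proof of Theorem~\ref{lcfWellDefined}. I will encode a hypothetical $(\kappa,\kappa)$-cut $(\overline{a},\overline{b})$ as a strictly increasing $\kappa$-sequence in a pseudofinite tree whose nodes are pairs with strictly increasing first coordinate and strictly decreasing second coordinate, apply tree-tops (Lemma~\ref{definableTreeTops}) to obtain an upper bound (legal because $\kappa<\mathfrak{t}_{\hat{V}}$), and then read an interpolant off the final pair of that upper bound, contradicting the assumption that $(\overline{a},\overline{b})$ is a cut.

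Concretely, suppose for contradiction that $(\overline{a},\overline{b}) = ((a_\alpha:\alpha<\kappa),(b_\alpha:\alpha<\kappa))$ witnesses $(\kappa,\kappa)\in\mathcal{C}_{\hat{V}}$. Fix $\hat{n}_*$ nonstandard with $\hat{n}_*>b_0$ and let $\hat{T}$ be the same pseudofinite tree used in Theorem~\ref{lcfWellDefined}: all $s\in(\hat{n}_*\times\hat{n}_*)^{<\hat{n}_*}$ with $s(\hat{m})(0)<s(\hat{m}')(0)<s(\hat{m}')(1)<s(\hat{m})(1)$ whenever $\hat{m}<\hat{m}'<\hat{\lg}(s)$. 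I build a strictly increasing $(s_\alpha:\alpha<\kappa)$ in $\hat{T}$ with $s_\alpha(\hat{\lg}(s_\alpha)-1) = (a_\alpha,b_\alpha)$. Successors are immediate, $s_{\alpha+1} = s_\alpha\frown(a_{\alpha+1},b_{\alpha+1})$. At a limit $\delta<\kappa$, Lemma~\ref{definableTreeTops} (applied with $|\delta|<\kappa<\mathfrak{t}_{\hat{V}}$) furnishes an upper bound $s_+\in\hat{T}$; letting $\hat{n}$ be greatest with $s_+(\hat{n})(0)<a_\delta$ and $s_+(\hat{n})(1)>b_\delta$, set $s_\delta = s_+\restriction_{\hat{n}}\frown(a_\delta,b_\delta)$, exactly as in Theorem~\ref{lcfWellDefined}.

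For the punchline, apply Lemma~\ref{definableTreeTops} once more, using $\kappa<\mathfrak{t}_{\hat{V}}$, to get $s\in\hat{T}$ extending the full sequence $(s_\alpha:\alpha<\kappa)$. Since each $s_{\alpha+1}\subseteq s$, each $s_\alpha$ is a proper initial segment of $s$, so in $\hat{V}$ we have $\hat{\lg}(s_\alpha)-1<\hat{\lg}(s)-1$. Writing $(c_0,c_1) = s(\hat{\lg}(s)-1)$, the tree conditions yield $a_\alpha = s(\hat{\lg}(s_\alpha)-1)(0) < c_0 < c_1 < s(\hat{\lg}(s_\alpha)-1)(1) = b_\alpha$ for every $\alpha<\kappa$. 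Hence $c_0\in\hat{\omega}$ is an interpolant of $(\overline{a},\overline{b})$, contradicting that it is a cut.

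The only step that needs real care is the limit stage: checking that $s_\delta\in\hat{T}$ and that every $s_\alpha$ for $\alpha<\delta$ remains an initial segment of $s_\delta$. Both follow routinely from strict monotonicity of the coordinates in $\hat{T}$ together with maximality of $\hat{n}$: any position of $s_+$ at which some earlier $(a_\alpha,b_\alpha)$ sits automatically satisfies the two inequalities defining $\hat{n}$, so it lies strictly below $\hat{n}+1$, so $s_\alpha\subseteq s_+\restriction_{\hat{n}}\subseteq s_\delta$. No new machinery beyond what already appears in Theorem~\ref{lcfWellDefined} is required.
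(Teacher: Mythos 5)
Your proposal is correct and is essentially the paper's own proof: the paper uses the same pseudofinite tree of nested pairs $(\hat{a}_\alpha,\hat{b}_\alpha)$, builds the same increasing $\kappa$-sequence with last entry $(\hat{a}_\alpha,\hat{b}_\alpha)$, and extracts the interpolant from the last pair of a tree-tops upper bound, merely writing the limit stages as ``the techniques of the previous proofs.'' The only nitpick is an off-by-one in your closing remark (the position of $(a_\alpha,b_\alpha)$ in $s_+$ is only $\leq\hat{n}$, so a priori $s_\alpha\subseteq s_+\restriction_{\hat{n}+1}$); this is harmless, since applying the same observation to $s_{\alpha+1}$ for $\alpha+1<\delta$ gives $\hat{\lg}(s_\alpha)\leq\hat{n}$ and hence $s_\alpha\subseteq s_+\restriction_{\hat{n}}\subseteq s_\delta$ as needed.
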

\begin{proof}
	Suppose it were, say via $(\hat{a}_\alpha: \alpha < \kappa)$, $(\hat{b}_\alpha: \alpha < \kappa)$. Write $\hat{N}_* = \hat{b}_0+1$. Let $(\hat{T}, \hat{<})$ be the tree of all sequences $s$ in $(\hat{N}_* \times \hat{N}_*)^{<\hat{N}_*}$ such that for all $\hat{n} < \hat{m} < \hat{\lg}(s)$, $s(\hat{n})(0) < s(\hat{n})(1) < s(\hat{m})(1) < s(\hat{m})(0)$. Using the techniques of the previous proofs it is easy to define $(s_\alpha: \alpha < \kappa)$ an increasing sequence from $\hat{T}$ such that if we set $\hat{n}_\alpha = \hat{\lg}(s_\alpha)$, then $s_\alpha(\hat{n}_\alpha-1) = (\hat{a}_\alpha, \hat{b}_\alpha)$. Then since $\kappa < \mathfrak{t}_{\hat{V}}$ is regular we can choose an upper bound $s$ for $(s_\alpha: \alpha < \kappa)$. Then $s(\hat{\lg}(s)-1)(0)$ is in the gap $(\hat{a}_\alpha: \alpha < \kappa), (\hat{b}_\alpha: \alpha < \kappa)$; but this was supposed to be a cut.
\end{proof}

Before finishing, we will want the following standard fact. It is listed as Fact 8.4 of \cite{pEqualsTref}.

\begin{lemma}\label{Combinatorics}
	For every $\kappa$, there is some map $g: [\kappa^+]^2 \to \kappa$ such that for every $X \subseteq \kappa^+$, if $|X| = \kappa^+$ then $|g[X^2]| = \kappa$.
\end{lemma}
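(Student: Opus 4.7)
The plan is to use the classical Sierpi\'nski-style construction: for each ordinal $\alpha < \kappa^+$, fix (using choice, and the fact that $|\alpha| \leq \kappa$) an injection $f_\alpha : \alpha \to \kappa$. Then define $g(\{\alpha, \beta\}) = f_{\max(\alpha,\beta)}(\min(\alpha,\beta))$. I interpret $g[X^2]$ as the image of $g$ on the unordered pairs from $X$.

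The upper bound $|g[X^2]| \leq \kappa$ is immediate since the range of $g$ is contained in $\kappa$. For the lower bound, suppose $X \subseteq \kappa^+$ with $|X| = \kappa^+$. Enumerate $X$ in increasing order as $(x_\alpha : \alpha < \kappa^+)$, and set $\beta = x_\kappa$. Then $\{x_\alpha : \alpha < \kappa\} \subseteq X \cap \beta$, so $|X \cap \beta| \geq \kappa$; on the other hand $|X \cap \beta| \leq |\beta| \leq \kappa$, giving $|X \cap \beta| = \kappa$. Since $f_\beta$ is injective on $\beta$, the image $f_\beta[X \cap \beta]$ has cardinality $\kappa$, and by construction this image is contained in $g[X^2]$ (every element $\alpha \in X \cap \beta$ contributes $g(\{\alpha,\beta\}) = f_\beta(\alpha)$). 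Hence $|g[X^2]| \geq \kappa$, and equality follows.

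There is no real obstacle here; the only mild subtlety is ensuring that some $\beta \in X$ has $\kappa$ predecessors inside $X$, which is taken care of by choosing $\beta$ to be the $\kappa$-th element of $X$ in its natural well-order. The argument uses only $|\alpha| \leq \kappa$ for $\alpha < \kappa^+$ and the axiom of choice, both of which are freely available.
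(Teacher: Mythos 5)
Your proof is correct and is essentially the paper's argument: both fix injections $f_\alpha:\alpha\to\kappa$ and define $g$ so that $g(\cdot,\alpha)$ is injective below $\alpha$, then pick an element of $X$ with $\kappa$ many predecessors in $X$. Your choice of $\beta = x_\kappa$ just makes explicit the paper's step ``choose $\alpha \in X$ with $|\alpha \cap X| = \kappa$.''
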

\begin{proof}
Choose $g$ so that for all $\gamma < \beta < \alpha$, $g(\gamma, \alpha) \not= g(\beta, \alpha)$ (this is possible since for all $\alpha < \kappa^+$, there is an injection from $\alpha$ to $\kappa$). Suppose $X \subseteq \kappa^+$ has size $\kappa^+$. Then we can choose $\alpha \in X$ such that $|\alpha \cap X| = \kappa$. Then for all $\beta, \gamma \in \alpha \cap X$ distinct, $g(\beta, \alpha) \not= g(\gamma, \alpha)$; hence $|g[X]^2] = \kappa$.
\end{proof}

To finish the proof of Theorem~\ref{pVEqualsTv}, it suffices to establish the following lemma; it corresponds to Theorem 8.1 of \cite{pEqualsTref}.

\begin{lemma}\label{KeyLemma}  Suppose $\mathfrak{p}_{\hat{V}} < \mathfrak{t}_{\hat{V}}$; write $\lambda = \mathfrak{p}_{\hat{V}}$, and let $\kappa < \lambda$ be regular. Then $(\kappa, \lambda) \not \in \mathcal{C}_{\hat{V}}$.
\end{lemma}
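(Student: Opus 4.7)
The plan is to argue by contradiction: suppose $(\hat a_\alpha: \alpha < \kappa), (\hat b_\beta: \beta < \lambda)$ is a $(\kappa, \lambda)$-cut in $\hat\omega$, and use the combinatorial function $g$ from Lemma~\ref{Combinatorics} to manufacture from it a pre-cut of total cardinality strictly less than $\lambda$ that admits no filler, contradicting the minimality of $\lambda = \mathfrak p_{\hat V}$. Note that $\lambda$ is regular (being a minimum of regular-cardinal sums $\kappa' + \theta'$ with $(\kappa', \theta') \in \mathcal C_{\hat V}$) and $\kappa < \lambda < \mathfrak t_{\hat V}$, so in particular $\kappa^+ \leq \lambda$. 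A first observation is that the gap admits many one-sided witnesses: for each $\beta < \lambda$, the type $\{\hat a_\alpha < x: \alpha < \kappa\} \cup \{x < \hat b_\beta\} \cup \{x \in \{0, 1, \ldots, \hat b_0\}\}$ has cardinality $\kappa + 2 < \mathfrak p_{\hat V}$ and so is realized by Theorem~\ref{localSaturation}.

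The core construction is a transfinite recursion of length $\kappa^+$ inside an internal pseudofinite tree $\hat T \in \hat V$, chosen (via pairing, as in the proof of Theorem~\ref{lcfWellDefined}) to be a subtree of $\hat n_*^{<\hat n_*}$ for some nonstandard $\hat n_* > \hat b_0$; the nodes of $\hat T$ record partial gap-witnesses together with $\kappa$-valued coloring data. By induction on $\alpha < \kappa^+$ I build an increasing sequence $(s_\alpha: \alpha < \kappa^+)$ in $\hat T$ whose terminal entry encodes a witness $\hat c_\alpha$ satisfying $\hat a_\gamma < \hat c_\alpha < \hat b_{F(\alpha)}$ for a strictly increasing $F: \kappa^+ \to \lambda$, along with the colors $g(\alpha', \alpha) \in \kappa$ for $\alpha' < \alpha$. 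At successor stages, Theorem~\ref{localSaturation} produces $\hat c_\alpha$ because the required type has cardinality at most $\kappa + |\alpha| + 1 \leq \kappa < \mathfrak p_{\hat V}$. At a limit stage $\delta < \kappa^+ \leq \lambda < \mathfrak t_{\hat V}$, Lemma~\ref{definableTreeTops} applied to the pseudofinite tree $\hat T$ provides an internal upper bound $s_+$ for $(s_\alpha: \alpha < \delta)$, and I truncate $s_+$ (choosing the largest initial segment consistent with the imposed constraints, as in the construction of $s_\delta$ in the proof of Theorem~\ref{lcfWellDefined}) to obtain $s_\delta$.

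After the construction, the defining property of $g$ in Lemma~\ref{Combinatorics}---that $|g[X^2]| = \kappa$ for every $\kappa^+$-sized $X \subseteq \kappa^+$---lets one partition the indices into $\kappa$ color classes and read off, for each color $\xi < \kappa$, a sub-pre-cut of cardinality less than $\lambda$ extracted from the subsequence of $(\hat c_\alpha)$ indexed by $\alpha$ for which $g(\alpha', \alpha) = \xi$ for some chosen $\alpha' < \alpha$. Assembling these $\kappa$ sub-pre-cuts yields an unfillable pre-cut of total cardinality strictly less than $\lambda = \mathfrak p_{\hat V}$, contradicting the definition of $\mathfrak p_{\hat V}$; equivalently, any filler of the assembled pre-cut must also fill the original $(\kappa, \lambda)$-cut, giving the same contradiction. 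The main obstacle, and where I would invest the bulk of the proof, is engineering $\hat T$ and the coherence conditions so that the color classes produced by $g$ genuinely correspond to pre-cuts whose cardinality drops below $\lambda$---rather than simply reproducing the hypothesis---and so that an amalgamation of the color-wise fillers propagates back to a filler of the original cut; the case $\kappa^+ = \lambda$ is the delicate one, since a naive construction of a strictly decreasing $(\hat c_\alpha: \alpha < \lambda)$ in the gap, without the $g$-coloring, just reproduces a $(\kappa, \lambda)$-cut.
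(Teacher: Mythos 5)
There is a genuine gap, and you have in effect flagged it yourself: the paragraph where you say the ``main obstacle'' is engineering $\hat T$ and the coherence conditions so that the colour classes of $g$ yield pre-cuts of smaller cardinality, and so that colour-wise fillers amalgamate to a filler of the original cut, is precisely the heart of the matter, and nothing in your sketch shows how to do it. Your intended endgame is also different from what actually works: the paper never manufactures a smaller unfillable pre-cut. Instead it runs the recursion to the full length $\lambda$ (not $\kappa^+$), building nodes $s_\beta$ whose entries are triples $(\hat e_{\hat n}, \hat D_{\hat n}, \hat g_{\hat n})$ in which $\hat e$ is an internal decreasing sequence pinned to the cut by $\hat e_{\hat d_\beta} = \hat b_\beta$, $\hat D_{\hat n}$ collects the earlier lengths $\hat d_\gamma$, and $\hat g_{\hat n}\colon [\hat D_{\hat n}]^2 \to \hat e_{\hat n}$ is an internal colouring coherently extending its predecessors and encoding the external $g$ via $\hat g_{\hat n}(\hat d_\gamma,\hat d_{\gamma'}) = \hat a_{g(\gamma,\gamma')}$. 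After taking an upper bound $s_\lambda$ in $\hat T$ (using $\lambda < \mathfrak t_{\hat V}$), one uses Theorem~\ref{lcfWellDefined} to know $\mbox{lcf}_{\hat V}(\lambda) = \kappa$, places a decreasing $\kappa$-sequence $(\hat k_\alpha)$ forming a cut with $(\hat d_\beta : \beta < \lambda)$, pigeonholes $\kappa^+$ many indices $\gamma$ onto a single $\alpha < \kappa$ for which $\hat d_\gamma \in \hat D_{\hat k_\alpha}$, observes that $\hat e_{\hat k_\alpha}$ lies below every $\hat b_\beta$ and hence (since the original pre-cut is a cut) below some $\hat a_{\alpha'}$, and then the anti-Ramsey property of $g$ produces $\gamma < \gamma'$ with $g(\gamma,\gamma') \geq \alpha'$, so that $\hat g_{\hat k_\alpha}(\hat d_\gamma, \hat d_{\gamma'}) = \hat a_{g(\gamma,\gamma')} \geq \hat e_{\hat k_\alpha}$ contradicts $\hat g_{\hat k_\alpha}$ mapping into $\hat e_{\hat k_\alpha}$. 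No ``smaller cut'' and no amalgamation of fillers is ever needed; the contradiction is internal and immediate.

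Two concrete reasons your variant, as stated, does not recover this. First, a recursion of length only $\kappa^+$ cannot make the witnesses (or the $\hat e$-values) cofinal against the $\lambda$-side when $\kappa^+ < \lambda$, and it is exactly that cofinality, together with the cut hypothesis, that forces the crucial inequality $\hat e_{\hat k_\alpha} \leq \hat a_{\alpha'}$; without it, the relevant internal quantity may simply sit inside the gap and no contradiction appears. Second, the use of $\mbox{lcf}_{\hat V}(\lambda) = \kappa$ from Theorem~\ref{lcfWellDefined} is indispensable for the pigeonhole step (only $\kappa$ possible values $\alpha_\gamma$ against $\kappa^+$ indices), and it only applies to the $\lambda$-indexed sequence of tree-node lengths produced by the full-length construction; your sketch never identifies where such a $\kappa$-cofinal structure above the construction comes from. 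Also note that at the successor-of-limit stages the paper needs Theorem~\ref{localSaturation} (not merely Lemma~\ref{definableTreeTops} plus truncation) to secure an upper bound satisfying the membership conditions $\hat d_\beta \in \hat D_{\hat n}$; your ``truncate the tree-tops bound'' step would have to be replaced by such a type-realization argument once the tree is actually specified.
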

\begin{proof} 
	We proceed like in the proof of Lemma~\ref{noSmallSymCuts}, but with a more inspired tree $\hat{T}$. Towards a contradiction, let $(\hat{a}_\alpha: \alpha < \kappa)$, $(\hat{b}_\beta: \beta < \lambda)$ be a $(\kappa, \lambda)$-cut. Let $\hat{N}_* = \hat{b}_0+1$.  Also, choose  a function $g: [\kappa^+]^2 \to \kappa$ as in Lemma~\ref{Combinatorics}. Extend $g$ to a function from $[\lambda]^2$ to $\kappa$ arbitrarily.
	
	Now, define $\hat{T}$ to be the tree of all sequences $s = (\hat{e}_{\hat{n}}, \hat{D}_{\hat{n}}, \hat{g}_{\hat{n}}: \hat{n} < \hat{n}_*) \in \hat{V}$ of length $\hat{n}_* < \hat{N}_*$, satisfying:
	
	\begin{enumerate}
		
		\item $(\hat{e}_{\hat{n}}: \hat{n} < \hat{n}_*)$ is a decreasing sequence with $\hat{e}_0 < \hat{N}_*$;
		\item For all $\hat{n} < \hat{n}_*$, we have that $\hat{D}_{\hat{n}} \subseteq \hat{n}$, and $\hat{g}_{\hat{n}}: [\hat{D}_{\hat{n}}]^2 \to \hat{e}_{\hat{n}}$;
		
		\item If $\hat{n}+1 < \hat{n}_*$ then $\hat{g}_{\hat{n}}$ and $\hat{g}_{\hat{n}+1}$ agree on $[\hat{D}_n \cap \hat{D}_{n+1}]^2$.
	\end{enumerate}

So as $\hat{n}$ increases, $\hat{g}_{\hat{n}}$ is squeezing pairs from $\hat{D}_{\hat{n}}$ into the shrinking space $\hat{e}_{\hat{n}}$. 

Suppose $\beta_* < \lambda$. Then say that the increasing sequence $(s_\beta: \beta \leq \beta_*)$ from $\hat{T}$ is nice if, writing $\hat{d}_\alpha = \lg(s_\alpha)$ for each $\beta \leq \beta_*$ and writing $s_{\beta_*}(\hat{n}) = (\hat{e}_{\hat{n}}, \hat{D}_{\hat{n}}, \hat{g}_{\hat{n}})$ for each $\hat{n} < \hat{d}_{\beta_*}$, the following conditions are met:

\begin{enumerate}

\item[4.] For all $\beta < \beta_*$, $\hat{e}_{\hat{d}_{\beta}} = \hat{b}_\beta$;

\item[5.] For all $\beta< \beta_*$ and for all $\hat{d}_\beta < \hat{n} < \hat{n}_*$, $\hat{d}_\beta \in \hat{D}_n$;

\item[6.] For all $\beta < \beta' < \beta_*$ and for all $\hat{d}_{\beta'} < \hat{n} < \hat{n}_*$, $\hat{g}_{\hat{n}}(\hat{d}_\beta, \hat{d}_{\beta'}) = \hat{a}_{g(\beta, \beta')}$.
\end{enumerate}

Also, for limit ordinals $\delta \leq \lambda$, say that the increasing sequence $(s_\beta: \beta < \delta)$  from $\hat{T}$ is nice each proper initial segment is.

\noindent \textbf{Claim.} There is a nice sequence $(s_\beta: \beta < \lambda)$ from $\hat{T}$.
\vspace{1 mm}

Before proving the claim, we indicate why it suffices. Let $(s_\beta: \beta < \lambda)$ be a nice sequence from $\hat{T}$. Since $\lambda = \mathfrak{p}_{\hat{V}} < \mathfrak{t}_{\hat{V}}$, we can find an upper bound $s_\lambda$ to $(s_\beta: \beta < \lambda)$ in $\hat{T}$. Write $\hat{d}_\beta = \lg(s_\beta)$ for each $\beta \leq \lambda$, and write $s_\lambda(\hat{n}) = (\hat{e}_{\hat{n}}, \hat{D}_{\hat{n}}, \hat{g}_{\hat{n}})$ for each $\hat{n} < \hat{d}_{\lambda}$. The idea is to find some $\gamma < \gamma' < \kappa^+$ and some $\hat{d}_{\gamma'} < \hat{n} < \hat{n}_*$ such that $\hat{n}$ is small enough that $\hat{g}_{\hat{n}}(\hat{d}_\gamma, \hat{d}_{\gamma'}) = \hat{a}_{g(\gamma, \gamma')}$, and such that $\hat{n}$ is large enough that $\hat{e}_{\hat{n}} \leq \hat{a}_{g(\gamma, \gamma')}$. This will be a contradiction.

Formally, choose a decreasing sequence $(\hat{k}_\alpha: \alpha < \kappa)$ with $\hat{k}_0 = \hat{d}_\lambda$ so that $(\hat{d}_\beta: \beta < \lambda), (\hat{k}_\alpha: \alpha < \kappa)$ is a cut; this is possible by uniqueness of $\mbox{lcf}_{\hat{V}}(\lambda) = \kappa$. Note that for each $\gamma < \kappa^+$, we can find some $\alpha_\gamma < \kappa$ such that whenever $\hat{d}_\gamma \leq \hat{n} \leq \hat{k}_{\alpha_\gamma}$, we have that $\hat{d}_\gamma \in \hat{D}_{\hat{n}}$ (otherwise, the least $\hat{n} \geq \hat{d}_\gamma$ with $\hat{d}_\gamma \not \in \hat{D}_{\hat{n}}$ would fill the cut $(\hat{d}_\beta: \beta < \lambda), (\hat{k}_\alpha: \alpha < \kappa)$). Then we can find some $\alpha < \kappa$ such that $\{\gamma < \kappa^+: \alpha_\gamma = \alpha\}$ has size $\kappa^+$.  Let $\alpha' < \kappa$ be large enough so that  $\hat{e}_{\hat{k}_\alpha} \leq \hat{a}_{\alpha'}$ (if there were no such $\alpha'$ then $\hat{e}_{\hat{k}_\alpha}$ would fill the cut $(\hat{a}_\alpha: \alpha < \kappa), (\hat{b}_\beta: \beta < \lambda)$). Now by choice of $g$, there are $\gamma < \gamma' \in \Gamma$ with $g(\gamma, \gamma') \geq \alpha'$. Now $\hat{g}_{\hat{k}_\alpha}(\hat{d}_\gamma, \hat{d}_{\gamma'}) = \hat{a}_{g(\gamma, \gamma')}$ by condition 3; but $\hat{a}_{g(\gamma, \gamma')} \geq \hat{a}_{\alpha'} \geq \hat{e}_{\hat{k}_\alpha}$, contradicting that $\hat{g}_{\hat{k}_\alpha}: [\hat{D}_{\hat{k}_\alpha}]^2 \to \hat{e}_{\hat{k}_\alpha}$.

So it suffices to prove the claim. We define our nice sequence  $(s_\beta: \beta <\lambda)$ inductively. At the stage $\beta = 0$, we just set $s_0 = \emptyset$. At limit stages, there is nothing to do.

Suppose we have constructed $(s_\beta: \beta < \beta_*)$, where $\beta_*< \lambda$ is a limit ordinal (i.e. we are at the successor of a limit stage). By Theorem~\ref{localSaturation}, we can find some upper bound $s_{\beta_*}$ to $(s_\beta: \beta < \beta_*)$ in $\hat{T}$, such that if we write $\hat{d}_\beta = \lg(s_\beta)$ for each $\beta \leq  \beta_*$, and write $s_{\beta_*} = (\hat{e}_{\hat{n}}, \hat{D}_{\hat{n}}, \hat{g}_{\hat{n}}: \hat{n} < \hat{d}_{\beta_*})$, then for all $\beta < \beta_*$ and for all $\hat{d}_{\beta} < \hat{n} < \hat{d}_{\beta_*}$, $\hat{d}_\beta \in \hat{D}_n$. Then $(s_\beta: \beta < \beta_* +1)$ is nice.

Finally, suppose we have constructed $(s_\beta: \beta < \beta_*+1)$ for some $\beta_*<  \lambda$. Write $\hat{d}_\beta = \lg(s_\beta)$ for each $\beta \leq \beta_*$, and write $s_{\beta_*} = (\hat{e}_{\hat{n}}, \hat{D}_{\hat{n}}, \hat{g}_{\hat{n}}: \hat{n} < \hat{d}_{\beta_*})$. Write $\hat{n} = \hat{d}_\beta$. Let $\hat{d}_{\beta_*+1} = \hat{n}+ 1$ and let $\hat{e}_{\hat{n}} = \hat{b}_{\beta}$. By Theorem~\ref{localSaturation} (using $\hat{\mathcal{P}}(\hat{D}_{\hat{n}-1})$ is pseudofinite), we can find some $\hat{D} \subseteq \hat{D}_{\hat{n}-1}$ such that $\hat{d}_\gamma \in \hat{D}$ for all $\gamma< \beta_*$, and such that for all $\hat{u} \in [\hat{D}]^2$, $\hat{g}_{\hat{n}-1}(\hat{u}) < \hat{e}_{\hat{n}} = \hat{b}_\beta$. (This uses that each $\hat{g}_{\hat{n}-1}(\hat{d}_\gamma, \hat{d}_{\gamma'}) = \hat{a}_{g(\gamma, \gamma')} < \hat{b}_\beta$.) Let $\hat{D}_{\hat{n}} = \hat{D} \cup \{\hat{n}\}$. By Theorem~\ref{localSaturation} again (using $\hat{e}_{\hat{n}}^{\hat{D} \times \hat{D}}$ is pseudofinite), we can find $\hat{g}_{\hat{n}}: [\hat{D}_{\hat{n}}]^2 \to \hat{e}_{\hat{n}}$ extending $\hat{g}_{\hat{n}-1} \restriction_{\hat{D}}$, such that $\hat{g}_{\hat{n}}(\hat{d}_\gamma, \hat{d}_{\beta_*}) = \hat{a}_{g(\gamma, \beta_*)}$ for every $\gamma < \beta_*$. Let $s_{\beta_*+1} = s_{\beta_*} \,^\frown\, (\hat{e}_{\hat{n}}, \hat{D}_{\hat{n}}, \hat{g}_{\hat{n}})$. Then $(s_\beta: \beta < \beta_*+2)$ is nice.
\end{proof}

This concludes the proof that $\mathfrak{p}_{\hat{V}} = \mathfrak{t}_{\hat{V}}$.

\section{$\mathfrak{p} = \mathfrak{t}$}\label{CardInvariants}
We begin the final leg of the proof of $\mathfrak{p} = \mathfrak{t}$ with the relevant definitions:

\begin{definition}
	\begin{itemize}
		\item Given $X, Y \subset \omega$, say that $X \subseteq_* Y$ if $X \backslash Y$ is finite.
		\item Given $\mathcal{B} = \{B_\alpha: \alpha < \kappa\}$ say that $\mathcal{B}$ has the strong finite intersection property if the intersection of finitely many elements from $\mathcal{B}$ is infinite. Say that $\mathcal{B}$ has a pseudo-intersection if there is some infinite $X \subset \omega$ with $X\subseteq_* B_\alpha$ for each $\alpha < \kappa$.
		\item Let $\mathfrak{p}$ be the least cardinality of a familiy $\mathcal{B}$ of subsets of $\omega$ with the strong finite intersection property but without an infinite pseudo-intersection.
		\item Say that $(X_\alpha: \alpha < \kappa)$ is a tower if each $X_\alpha \subseteq \omega$ is infinite, and $\alpha < \beta < \kappa$ implies $X_\alpha \supseteq_* X_\beta$.
		\item Let $\mathfrak{t}$ be the least cardinality of a tower with no pseudo-intersection.
	\end{itemize}
\end{definition}

Obviously $\mathfrak{p} \leq \mathfrak{t}$. See \cite{CombCharSurvey} for a survey on the classical theory of cardinal invariants of the continuum.

We will want the following definition.

To begin making connections with the previous section we observe the following lemma.

\begin{lemma}\label{pVtVEquivalents}
	Suppose $\hat{V} \models ZFC^-$ is $\omega$-nonstandard. Then the following are equivalent:
	
	\begin{itemize}
		\item[(A)] $\lambda < \mathfrak{p}_{\hat{V}}$.
		\item[(B)] $\lambda < \mathfrak{t}_{\hat{V}}$.
		\item[(C)] Whenever $(\hat{a}_\alpha: \alpha < \lambda)$ is a family from $[\hat{\omega}]^{<\hat{\aleph}_0}$ such that for all $\alpha_0, \ldots, \alpha_{n-1} \in \lambda$, $|\hat{a}_{\alpha_0} \hat{\cap} \ldots \hat{\cap} \hat{a}_{\alpha_{n-1}}|$ is nonstandard, then there is $\hat{a} \in [\hat{\omega}]^{<\hat{\aleph_0}}$ with $|\hat{a}|$ nonstandard, such that $\hat{a}\subseteq \hat{a}_\alpha$ for each $\alpha < \lambda$.
		\item[(D)] Whenever $(\hat{a}_\alpha: \alpha < \lambda)$ is a descending sequence of nonempty sets from $[\hat{\omega}]^{<\hat{\aleph}_0}$, there is some $\hat{m}<\hat{\omega}$ such that $\hat{m} \in \hat{a}_\alpha$ for each $\alpha < \lambda$.
	\end{itemize}
\end{lemma}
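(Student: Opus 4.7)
The plan is to establish the cycle (A)$\Leftrightarrow$(B), (A)$\Rightarrow$(D), (A)$\Rightarrow$(C), (D)$\Rightarrow$(B), (C)$\Rightarrow$(B). The first equivalence is immediate from Lemma~\ref{pLeqT} and Theorem~\ref{pVEqualsTv}, which together give $\mathfrak{p}_{\hat{V}} = \mathfrak{t}_{\hat{V}}$ and hence render (A) and (B) literally the same statement.

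The two forward implications (A)$\Rightarrow$(D) and (A)$\Rightarrow$(C) are both direct applications of Theorem~\ref{localSaturation}. For (D), given a descending $(\hat{a}_\alpha : \alpha < \lambda)$ of nonempty pseudofinite sets, I would apply Theorem~\ref{localSaturation} to the partial type
\[ p(x) = \{\, x \hat{\in} \hat{a}_\alpha : \alpha < \lambda \,\} \]
over the pseudofinite ``universe'' $\hat{X} = \hat{a}_0$. Finite satisfiability is immediate: the intersection of finitely many $\hat{a}_{\alpha_i}$'s equals $\hat{a}_{\max \alpha_i}$, which is nonempty by hypothesis. For (C) with $\lambda$ finite, the intersection itself is the desired $\hat{a}$; for $\lambda$ infinite, I would apply Theorem~\ref{localSaturation} to
\[ p(x) = \{\, x \hat{\subseteq} \hat{a}_\alpha : \alpha < \lambda \,\} \cup \{\, |x| > n : n < \omega \,\}, \]
over the pseudofinite universe $\hat{X} = \hat{\mathcal{P}}(\hat{a}_0)$. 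The type has cardinality $\lambda < \mathfrak{p}_{\hat{V}}$, and finite satisfiability follows from the SFIP-with-nonstandard hypothesis: each finite intersection has nonstandard internal size, hence contains a subset of every standard size $n$.

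For the converses, I would argue (C)$\Rightarrow$(B) and (D)$\Rightarrow$(B) contrapositively via a single tree-top construction. Fix $\lambda \geq \mathfrak{t}_{\hat{V}}$ and let $(s_\alpha : \alpha < \mathfrak{t}_{\hat{V}})$ be an unbounded increasing sequence in some pseudofinite tree $\hat{n}_*^{<\hat{n}_*}$. Set
\[ \hat{a}_\alpha = \{\, \hat{t} \hat{\in} \hat{n}_*^{<\hat{n}_*} : s_\alpha \hat{\subseteq} \hat{t} \,\}. \]
This is a strictly descending family; each $\hat{a}_\alpha$ is pseudofinite of nonstandard internal cardinality, so every finite intersection has nonstandard size. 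But any common element $\hat{t}$ of all the $\hat{a}_\alpha$ (equivalently, any element of a nonstandard-sized pseudo-intersection) would be an upper bound of $(s_\alpha)$ in the tree, contradicting unboundedness. Re-indexing through a weakly increasing cofinal map $\lambda \to \mathfrak{t}_{\hat{V}}$ (available whenever $\mbox{cof}(\lambda) \geq \mathfrak{t}_{\hat{V}}$, in particular for regular $\lambda \geq \mathfrak{t}_{\hat{V}}$) extends the construction to length $\lambda$; for (C) one may additionally pad unused indices with the ambient set $\hat{n}_*^{<\hat{n}_*}$, which preserves nonstandard SFIP and does not create new pseudo-intersections.

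The only real obstacle is packaging: in (A)$\Rightarrow$(C) one must encode the ``$|\hat{a}|$ nonstandard'' conclusion into the type by adjoining the countable family $\{\,|x|>n : n < \omega\,\}$, and verify that since $\aleph_0 \leq \lambda < \mathfrak{p}_{\hat{V}}$ this does not push the type cardinality above $\mathfrak{p}_{\hat{V}}$. Everything else follows routinely from Theorem~\ref{localSaturation} and the tree-top argument already used repeatedly in Section~\ref{pVLeqTvSec}.
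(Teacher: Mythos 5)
Your decomposition and your tools are essentially the paper's: (A) and (B) are identified via Theorem~\ref{pVEqualsTv}; (A)$\Rightarrow$(C) and (A)$\Rightarrow$(D) are exactly the intended applications of Theorem~\ref{localSaturation}, your type $\{x \subseteq \hat{a}_\alpha : \alpha<\lambda\}\cup\{|x|>n : n<\omega\}$ over $\hat{X}=\hat{\mathcal{P}}(\hat{a}_0)$ being precisely what the paper's parenthetical remark alludes to; and your cone construction $\hat{a}_\alpha=\{\hat{t}\in\hat{n}_*^{<\hat{n}_*}: s_\alpha\subseteq\hat{t}\}$ is the same device the paper uses for the converse, except that the paper runs it in the direct form (assume (D) for the given $\lambda$, apply it to an arbitrary increasing $\lambda$-sequence, and read off an upper bound) and closes the cycle with the cheap implication (C)$\Rightarrow$(D) rather than a separate (C)$\Rightarrow$(B). (Minor point: your $\hat{a}_\alpha$ are subsets of $\hat{n}_*^{<\hat{n}_*}$, so, as the paper does, you should compose with an internal injection into $\hat{\omega}$, since (C) and (D) concern elements of $[\hat{\omega}]^{<\hat{\aleph}_0}$.)

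The one step that is wrong as written is the re-indexing remark. A weakly increasing cofinal map $\lambda\to\mathfrak{t}_{\hat{V}}$ exists if and only if $\mbox{cof}(\lambda)=\mathfrak{t}_{\hat{V}}$, not whenever $\mbox{cof}(\lambda)\geq\mathfrak{t}_{\hat{V}}$: if $\mbox{cof}(\lambda)>\mathfrak{t}_{\hat{V}}$ (e.g.\ $\lambda=\mathfrak{t}_{\hat{V}}^+$, which falls under your ``in particular for regular $\lambda\geq\mathfrak{t}_{\hat{V}}$''), then every weakly increasing map $\lambda\to\mathfrak{t}_{\hat{V}}$ is bounded, hence not cofinal, so your contrapositive does not cover such $\lambda$. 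Nor can this be patched in full generality: if $\mathfrak{t}_{\hat{V}}=|\hat{V}|$ (for instance $\hat{V}=\HC^\omega/\mathcal{U}$ under CH) and $\lambda=|\hat{V}|^+$, then every descending $\lambda$-sequence of internal sets is eventually constant, so (D) holds at $\lambda$ while (B) fails; the equivalence is really only meaningful, and only used, for $\lambda$ below this range. The paper quietly sidesteps the issue by stating the last implication in the direct form above, and in the only place the converse is invoked (Claim 2 of Theorem~\ref{pEqualsT}) it suffices to run the argument at $\lambda=\mathfrak{t}_{\hat{V}}$ itself, a regular cardinal below $\mathfrak{t}$ if $\mathfrak{t}_{\hat{V}}<\mathfrak{t}$, where no stretching is needed. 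So keep your construction at length $\mathfrak{t}_{\hat{V}}$, drop the general re-indexing claim, and either present the converse in the paper's direct form or note explicitly that only the case $\mathrm{cof}(\lambda)=\mathfrak{t}_{\hat{V}}$ (indeed only $\lambda=\mathfrak{t}_{\hat{V}}$) is required.
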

\begin{proof}
	(A) and (B) are equivalent by Theorem~\ref{pVEqualsTv}, and they imply the other items by Theorem~\ref{localSaturation}. (For (A) implies (C), note that we are requiring $\hat{a} \in \hat{\mathcal{P}}(\hat{a}_0)$, a pseudofinite set.) Also, clearly (C) implies (D). So it suffices to show that (D) implies (B).
	
	Suppose $(s_\alpha: \alpha < \lambda)$ is an increasing sequence from $\hat{n}^{<\hat{n}}$. Let $\hat{a}_\alpha = \{s \in \hat{n}^{\hat{n}-1}: s_\alpha \subseteq s\}$. Then by (D) (and applying an injection from $\hat{n}^{\hat{n}-1}$ to $\hat{n}'$ for large enough $\hat{n}'$) we can choose $s \in \hat{n}^{\hat{n}}$ with $s \in \hat{a}_\alpha$ for each $\alpha < \lambda$. Then $s$ is an upper bound on $(s_\alpha: \alpha < \lambda)$.
\end{proof}

We need one more lemma. It is implicit in the proof of Claim 14.7 of \cite{pEqualsTref}.

\begin{definition}
	Suppose $f, g: \omega \to [\omega]^{<\aleph_0}$ and $A \subseteq \omega$ is infinite. Then say that $f \leq_A g$ if $\{n \in A: f(n) \not \subseteq g(n)\}$ is finite.
\end{definition}

\begin{lemma}\label{pEqualsTTechnical}
	Suppose $\lambda <  \mathfrak{t}$ is an infinite cardinal and $A \subseteq \omega$ is infinite and $(f_\alpha: \alpha < \lambda)$ is a sequence from $([\omega]^{<\aleph_0})^\omega$ with $f_\alpha \geq_A f_\beta$ for all $\alpha < \beta < \lambda$. Suppose further that for each $\alpha < \lambda$, $\{m \in A: f_\alpha(m) = \emptyset\}$ is finite. Then there is some infinite $B \subseteq A$ and some $f: \omega \to [\omega]^{<\aleph_0}$ such that $f \leq_B f_\alpha$ for each $\alpha < \lambda$, and further $f(m) \not= \emptyset$ for each $m \in B$.
\end{lemma}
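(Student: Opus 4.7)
The plan is to encode the sequence $(f_\alpha : \alpha < \lambda)$ as a tower of subsets of $\omega$ and apply the definition of $\mathfrak{t}$ directly. Fix a bijection between $\omega \times \omega$ and $\omega$ so that we can work with subsets of $\omega \times \omega$. For each $\alpha < \lambda$, define
\[
X_\alpha = \{(n,k) : n \in A,\ k \in f_\alpha(n)\} \subseteq \omega \times \omega.
\]
Then $X_\alpha$ is infinite for each $\alpha$, since by hypothesis $\{m \in A : f_\alpha(m) = \emptyset\}$ is finite, so infinitely many $n \in A$ contribute at least one element to $X_\alpha$. Moreover, for $\alpha < \beta < \lambda$, if $(n,k) \in X_\beta \setminus X_\alpha$ then $n \in A$ and $k \in f_\beta(n) \setminus f_\alpha(n)$, forcing $f_\beta(n) \not\subseteq f_\alpha(n)$; by the hypothesis $f_\alpha \geq_A f_\beta$, there are only finitely many such $n$, each contributing a finite fiber, so $X_\beta \setminus X_\alpha$ is finite. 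Hence $(X_\alpha : \alpha < \lambda)$ is a tower.

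Since $\lambda < \mathfrak{t}$, this tower has a pseudo-intersection $Y \subseteq \omega \times \omega$; that is, $Y$ is infinite and $Y \subseteq_* X_\alpha$ for every $\alpha < \lambda$. To ensure finite vertical fibers, replace $Y$ by $Y' := Y \cap X_0$, which differs from $Y$ by a finite set and hence is still infinite and still almost contained in every $X_\alpha$. Now set
\[
B := \{n \in \omega : \exists k,\ (n,k) \in Y'\}, \qquad f(n) := \{k : (n,k) \in Y'\}
\]
for $n \in B$, and $f(n) := \emptyset$ otherwise. Since $Y' \subseteq X_0 \subseteq A \times \omega$ we have $B \subseteq A$; since each fiber $f(n) \subseteq f_0(n)$ is finite and $Y'$ is infinite, $B$ itself must be infinite. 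By construction $f(n) \neq \emptyset$ for every $n \in B$.

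It remains to verify $f \leq_B f_\alpha$ for each $\alpha < \lambda$. If $n \in B$ and $f(n) \not\subseteq f_\alpha(n)$, then some $(n,k) \in Y'$ satisfies $(n,k) \notin X_\alpha$; but $Y' \setminus X_\alpha$ is a finite subset of $\omega \times \omega$, so only finitely many such $n$ exist. This completes the argument. The only step requiring any care is the choice to intersect with $X_0$ to guarantee finite fibers (equivalently, that $f$ actually takes values in $[\omega]^{<\aleph_0}$); everything else is routine bookkeeping once the encoding $X_\alpha$ and the appeal to $\mathfrak{t}$ are in place. I do not anticipate a serious obstacle.
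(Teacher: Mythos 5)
Your proof is correct and follows essentially the same route as the paper: encode each $f_\alpha$ as a subset $X_\alpha$ of $\omega\times\omega$, check that $(X_\alpha:\alpha<\lambda)$ is a tower, take a pseudo-intersection using $\lambda<\mathfrak{t}$, and read off $B$ and $f$ from its fibers (the paper secures finite fibers via $X\subseteq_* X_0$ rather than intersecting with $X_0$, and assumes $A=\omega$ for notational simplicity where you carry $A$ explicitly — cosmetic differences only).
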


\begin{proof}
	For notational simplicity, we assume $A = \omega$.
	For each $\alpha < \lambda$ define $X_\alpha := \{\langle m, n\rangle: n \in f_\alpha(m)\}$; so $X_\alpha$ is an infinite subset of $\omega \times \omega$. Suppose $\alpha < \beta$; then there is $m_*$ so that for all $m\geq m_*$, $f_\alpha(m) \subseteq f_\beta(m)$. Hence $X_\alpha \backslash X_\beta \subseteq \bigcup_{m < m_*} \{m\} \times f_\alpha(m)$ is finite, so $X_\alpha \supseteq_* X_\beta$. Hence $(X_\alpha: \alpha < \lambda)$ is a tower; by hypothesis on $\lambda$ we can choose an infinite $X \subseteq \omega \times \omega$ such that $X \subseteq_* X_\alpha$ for each $\alpha < \lambda$. Define $f: \omega \to [\omega]^{<\aleph_0}$ by $f(m) = \{n: \langle m, n \rangle \in X\}$. (Each $f(m)$ is finite because $X \subseteq_* X_0$.) Let $B = \{m < \omega: f(m) \not= \emptyset\}$. Clearly this works.
\end{proof}

\newpage

Finally, the following is Theorem 14.1 of \cite{pEqualsTref}.

\begin{theorem}\label{pEqualsT}
	$\mathfrak{p} =\mathfrak{t}$.
\end{theorem}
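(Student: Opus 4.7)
The strategy is contradiction: assume $\mathfrak{p}<\mathfrak{t}$. First, I would Levy-collapse $2^{<\mathfrak{t}}$ down to $\mathfrak{t}$; since $\mathrm{Coll}(\mathfrak{t},2^{<\mathfrak{t}})$ is $<\mathfrak{t}$-closed, it adds no sequences of length $<\mathfrak{t}$, so it preserves both $\mathfrak{p}$ and $\mathfrak{t}$ while forcing $\mathfrak{t}=2^{\aleph_0}=2^{<\mathfrak{t}}$, and we may assume this equality. Fix a family $\mathcal{B}=(B_\beta:\beta<\mathfrak{p})$ witnessing $\mathfrak{p}$ (SFIP, no pseudo-intersection), a transitive $V\models ZFC^-$ with $\mathcal{B}\in V$, and write $\lambda:=\mathfrak{t}$.

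The main task is to build a ``sufficiently generic'' ultrafilter $\mathcal{U}$ on $\omega$ by transfinite induction of length $\lambda$ so that $\hat V:=V^\omega/\mathcal{U}$ satisfies $\mathfrak{t}_{\hat V}\geq\mathfrak{t}$ and $\mathfrak{p}_{\hat V}\leq\mathfrak{p}$, contradicting Theorem~\ref{pVEqualsTv}. At each stage $\alpha<\lambda$ I maintain a filter $\mathcal{U}_\alpha$ on $\omega$ generated by $\leq|\alpha|+\aleph_0$ sets, with the inductive invariant that every $<\mathfrak{t}$-size subfamily of $\mathcal{U}_\alpha$ admits a pseudo-intersection in $\omega$ (maintained at limit stages using the defining property of $\mathfrak{t}$ that towers of length $<\mathfrak{t}$ have pseudo-intersections). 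Using $\lambda^{<\lambda}=\lambda$, I book-keep in $\lambda$ stages: (a) all subsets of $\omega$, to achieve ultrafilterhood; (b) all sequences $(f_\gamma)_{\gamma<\kappa}$ of functions $\omega\to[\omega]^{<\aleph_0}$ with $\kappa<\lambda$, as ``tower names'' in $\hat V$; and (c) all functions $a:\omega\to[\omega]^{<\aleph_0}$, as candidate refinements of an intended cut.

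The actions at each stage are as follows. For a tower name $(f_\gamma)_{\gamma<\kappa}$, apply Lemma~\ref{pEqualsTTechnical} to produce an infinite $B$ and a function $g$ with $g\leq_B f_\gamma$ for each $\gamma$ and $g(m)\neq\emptyset$ on $B$; add $B$ to $\mathcal{U}_{\alpha+1}$ and fix a selector $h(m)\in g(m)$, so that in $\hat V$ the class $[h]\in\hat\omega$ realizes a common element of the descending pseudofinite sets encoded by $(\hat f_\gamma)$. By Lemma~\ref{pVtVEquivalents}(D) this forces $\mathfrak{t}_{\hat V}\geq\mathfrak{t}$. For $\mathfrak{p}_{\hat V}\leq\mathfrak{p}$, fix once and for all the nonstandard $\hat N:=[n\mapsto n]\in\hat\omega$ and the pseudofinite sets $\hat a_\beta:=\hat B_\beta\cap[0,\hat N)$ for $\beta<\mathfrak{p}$, which have nonstandard pairwise intersections by SFIP of $\mathcal{B}$. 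By the contrapositive of Lemma~\ref{pVtVEquivalents}(C), it suffices to arrange, for each book-kept $a$, that $\hat a$ is not a pseudofinite subset of all $\hat a_\beta$ of nonstandard size.

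The heart of the argument is this last step. For $a$ at stage $\alpha$, set $p(a):=\{\{n:|a(n)|\geq k\}:k<\omega\}\cup\{\{n:a(n)\subseteq B_\beta\}:\beta<\mathfrak{p}\}$. The dichotomy is: either (i) some $P\in p(a)$ has $\omega\setminus P$ compatible with $\mathcal{U}_\alpha$ (so finite intersections with $\mathcal{U}_\alpha$-elements are infinite), in which case I add $\omega\setminus P$ to $\mathcal{U}_{\alpha+1}$ and rule out the bad behavior of $\hat a$; or (ii) for each $P\in p(a)$ there is $D_P\in\mathcal{U}_\alpha$ with $D_P\subseteq_* P$. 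In case (ii), $(D_P)_{P\in p(a)}$ is a subfamily of $\mathcal{U}_\alpha$ of size $\mathfrak{p}<\mathfrak{t}$, so by the inductive saturation it admits a pseudo-intersection $E\subseteq\omega$; then $X:=\bigcup_{n\in E}a(n)$ is infinite (since $|a(n)|$ grows unboundedly along $E$ forcing $X$ unbounded) and satisfies $X\subseteq_* B_\beta$ for each $\beta<\mathfrak{p}$, making $X$ a pseudo-intersection of $\mathcal{B}$, contradicting the choice of $\mathcal{B}$. Thus case (ii) never occurs, and case (i) always handles $a$. The chief technical obstacle I foresee is verifying the inductive saturation invariant of $\mathcal{U}_\alpha$ throughout the construction — in particular at limit stages, where one must turn SFIP subfamilies into towers and invoke the defining property of $\mathfrak{t}$. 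Granted this, combining $\mathfrak{t}_{\hat V}\geq\mathfrak{t}$, $\mathfrak{p}_{\hat V}\leq\mathfrak{p}$, and Theorem~\ref{pVEqualsTv} yields $\mathfrak{t}\leq\mathfrak{p}$, the desired contradiction.
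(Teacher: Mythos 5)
Your overall architecture is the paper's: assume $\mathfrak{p}<\mathfrak{t}$, Levy-collapse so that $\mathfrak{t}=2^{\aleph_0}=2^{<\mathfrak{t}}$, build a generic ultrafilter $\mathcal{U}$, set $\hat V=V^\omega/\mathcal{U}$, and contradict Theorem~\ref{pVEqualsTv} via Lemma~\ref{pVtVEquivalents} and Lemma~\ref{pEqualsTTechnical}. The genuine gap is in how you secure $\mathfrak{t}_{\hat V}\geq\mathfrak{t}$. You propose to handle each ``tower name'' $(f_\gamma)_{\gamma<\kappa}$ once, at its book-kept stage, by applying Lemma~\ref{pEqualsTTechnical}. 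But that lemma needs an infinite $A$ on which the sequence is $\leq_A$-decreasing with almost-everywhere nonempty values, and whether the name actually denotes a descending sequence of nonempty pseudofinite sets in $\hat V$ is decided only by the \emph{final} ultrafilter: the sets $\{m: f_{\gamma'}(m)\subseteq f_\gamma(m)\}$ and $\{m: f_\gamma(m)\neq\emptyset\}$ need not be in $\mathcal{U}_\alpha$ at the stage where you treat the name, so there may be no usable $A$ then, yet the name can still become descending modulo the final $\mathcal{U}$ --- and then nothing in your construction supplies the common element needed for Lemma~\ref{pVtVEquivalents}(D). (Also, even when Lemma~\ref{pEqualsTTechnical} applies, the $B$ it returns is only an infinite subset of $A$; you must choose $A$ tied to the current filter to be allowed to add $B$.) The paper's construction is built exactly to dodge this timing problem: $\mathcal{U}$ is generated by a tower $(A_\gamma:\gamma<\mathfrak{t})$, and requirement (3) is imposed for every pair (sequence, $A_\gamma$), which is a condition checkable during the construction; at the end, since every member of $\mathcal{U}$ almost-contains some $A_\gamma$ and $\mathfrak{t}$ is regular, any $<\mathfrak{t}$-length descending sequence in $\hat V$ is $\leq_{A_{\gamma_*}}$-decreasing for a single $\gamma_*$, and the pre-arranged instance of (3) yields the bound. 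You need some such device (treating each name relative to every tower element, with $(2^{\aleph_0})^{<\mathfrak{t}}=\mathfrak{t}$ bounding the task list); a single-shot treatment does not suffice.

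A second, more repairable problem is your inductive invariant. Adding an arbitrary set merely compatible (SFIP) with $\mathcal{U}_\alpha$ --- your case (i) action, and likewise adding $B$ from Lemma~\ref{pEqualsTTechnical} --- does not preserve ``every $<\mathfrak{t}$-sized subfamily has a pseudo-intersection'': under the standing assumption $\mathfrak{p}<\mathfrak{t}$, a filter base of size $\geq\mathfrak{p}$ with SFIP can lack a pseudo-intersection, so the invariant can die at successor stages, not just limits (with a tower discipline, limits are automatic from the definition of $\mathfrak{t}$). The fix is the paper's: keep the generators a tower and run your dichotomy relative to a pseudo-intersection $E$ of the current generators --- either $E\setminus P$ is infinite for some $P\in p(a)$, and you add $E\setminus P$, or $E\subseteq_* P$ for all $P\in p(a)$, in which case your own case-(ii) computation makes $\bigcup_{n\in E}a(n)$ an infinite pseudo-intersection of $\mathcal{B}$, a contradiction; stated that way your diagonalization is sound. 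It is, however, extra machinery you don't need: in the paper $\mathfrak{p}_{\hat V}\leq\mathfrak{p}$ comes for free from the tower structure --- given an SFIP family of size $<\mathfrak{p}_{\hat V}$, Lemma~\ref{pVtVEquivalents}(C) produces a nonstandard-size common subset $[f]$, and pulling $f$ back along the tower (regularity of $\mathfrak{t}$ again) gives a genuine pseudo-intersection --- so no bookkeeping against candidate functions $a$ and no fixed witness family $\mathcal{B}$ is required for that half.
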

\begin{proof}
	We know that $\mathfrak{p} \leq \mathfrak{t}$; suppose towards a contradiction that $\mathfrak{p} < \mathfrak{t}$. We can suppose that $\mathfrak{t} = 2^{\aleph_0} = 2^{<\mathfrak{t}}$ since if we force by the Levy collapse of $2^{<\mathfrak{t}}$ to $\mathfrak{t}$, this adds no new sequences of reals of length less than $\mathfrak{t}$, and so does not affect the values of $\mathfrak{p}$ and $\mathfrak{t}$. So henceforth we assume this.
	
	Our aim is to build a special ultrafilter $\mathcal{U}$ on $\omega$, such that if we set $\hat{V} =V^\omega / \mathcal{U}$ for some or any transitive $V \models ZFC^-$, then $\mathfrak{p}_{\hat{V}} \leq \mathfrak{p}$ and $\mathfrak{t}_{\hat{V}} \geq \mathfrak{t}$. In view of $\mathfrak{p}_{\hat{V}} = \mathfrak{t}_{\hat{V}}$ this clearly suffices for the contradiction.
	
	Inductively choose a tower $(A_\gamma: \gamma < \mathfrak{t})$ so that:
	
	\begin{itemize}
		\item[(1)] (This is the definition of tower) Each $A_\gamma$ is infinite and $\gamma < \gamma' < \mathfrak{t}$ implies $A_{\gamma'} \subseteq_* A_\gamma$.
		\item[(2)] For each $A \subseteq \omega$, there is some $\gamma< \mathfrak{t}$ such that either $A \subseteq X_\gamma$ or else $A \cap X_\gamma = \emptyset$; 
		\item[(3)] Suppose $(f_\alpha: \alpha < \lambda)$ is a sequence from $([\omega]^{<\aleph_0})^\omega$ of length $\lambda < \mathfrak{t}$, and for some $\gamma < 2^{\aleph_0}$, we have that for all $\alpha < \beta < \lambda$, $f_\beta \leq_{A_{\gamma}} f_\alpha$. 
		
		Then for some $\gamma_* \geq \gamma$, there is some $f: \omega \to [\omega]^{<\aleph_0}$ such that $\{m \in A_{\gamma_*}: f(m) = \emptyset\}$ is finite, and such that $f \leq_{A_{\gamma_*}} f_\alpha$ for each $\alpha < \lambda$.
	\end{itemize}
	
	This is straightforward, using Lemma~\ref{pEqualsTTechnical} and that $(2^{\aleph_0})^{<\mathfrak{t}} = \mathfrak{t}$. Let $\mathcal{U} $ be the set of all $A \subset \omega$ such that $A_\gamma \subseteq_* A$ for some $\gamma < 2^{\aleph_0}$. Then $\mathcal{U}$ is a nonprincipal ultrafilter, by (1) and (2). Let $V$ be a transitive model of $ZFC^-$; for instance, we can take $V = \HC$, the set of hereditarily countable sets. Let $\hat{V} = V^\omega/\mathcal{U}$.

	\vspace{1 mm}
	
	\noindent \textbf{Claim 1.} $\mathbf{p}_{\hat{V}} \leq \mathfrak{p}$.
	
	\noindent \emph{Proof of Claim 1.} Suppose $\lambda < \mathfrak{p}_{\hat{V}}$; we show $\lambda < \mathfrak{p}$. Let $\{B_\alpha: \alpha < \lambda\}$ be a family of subsets of $\omega$ with the strong finite intersection property. Define $f_\alpha: \omega \to [\omega]^{<\aleph_0}$ by $f_\alpha(m) = B_\alpha \cap m$; let $\hat{a}_\alpha = [f_\alpha]_\mathcal{U}$.  So each $\hat{a}_\alpha \in [\hat{\omega}]^{<\hat{\aleph}_0}$, and $\{\hat{a}_\alpha: \alpha < \lambda\}$ satisfies the hypothesis of Lemma~\ref{pVtVEquivalents}(C). Since $\lambda < \mathfrak{p}_{\hat{V}}$ there is $\hat{a} \in [\hat{\omega}]^{<\hat{\aleph}_0}$ with $|\hat{a}|$ nonstandard, and with $\hat{a} \subseteq \hat{a}_\alpha$ for each $\alpha < \omega$.
	
	Write $\hat{a} = [f]_{\mathcal{U}}$. For each $\alpha < \lambda$ there is some $\gamma < \mathfrak{t}$ such that $f \leq_{A_\gamma} f_\alpha$. Since $\mathfrak{t}$ is regular, we can choose $\gamma_*$ large enough so that $f \leq_{A_{\gamma_*}} f_\alpha$ for each $\alpha < \lambda$. Define $B \subseteq \omega$ by $B = \bigcup_{m \in A_{\gamma_*}} f(m)$. $B$ is infinite since $\{m  \in A_{\gamma_*}: |f(m)| \geq n\} \in \mathcal{U}$ for each $n < \omega$. Also, suppose $\alpha < \lambda$; choose $m_*$ large enough so that $f(m) \subseteq f_\alpha(m)$ for every $m \in A_{\gamma_*} \backslash m_*$. Then $B \backslash B_{\alpha} \subseteq \bigcup_{m \in A_{\gamma_*} \cap m_*} f(m)$ is finite, so $B \subseteq_* B_\alpha$. This shows that $\lambda < \mathfrak{p}$, concluding the proof of the claim.
	
	\vspace{1 mm}
	
	\noindent \textbf{Claim 2.} $\mathfrak{t}_{\hat{V}} \geq \mathfrak{t}$.
	
	\noindent \emph{Proof of Claim 2.}  Let $\lambda < \mathfrak{t}$ be given; we show $\lambda < \mathfrak{t}_{\hat{V}}$. It suffices to show (D) from Lemma~\ref{pVtVEquivalents} holds. So let $(\hat{a}_\alpha: \alpha < \lambda)$ be a descending sequence of nonempty sets from $[\hat{\omega}]^{<\hat{\aleph}_0}$; write $\hat{a}_\alpha = [f_\alpha]_{\mathcal{U}}$.
	
	Note that for each $\alpha < \beta < \lambda$ there is some $\gamma$ with $f_\alpha \geq_{A_\gamma} f_\beta$, and for each $\alpha < \lambda$ there is some $\gamma$ with $\{m \in A_\gamma: f_\alpha(m) = \emptyset\}$ finite. Since $\mathfrak{t}$ is regular we can choose $\gamma_*$ large enough so that $f_\alpha \geq_{A_{\gamma_*}} f_\beta$ for all $\alpha < \beta < 2^{\aleph_0}$, and such that $\{m \in A_{\gamma_*}: f_\alpha(m) = \emptyset\}$ is finite for each $\alpha < \lambda$. 
	
	By item (3) of the construction we can find $\gamma \geq \gamma_*$ and $f: \omega \to [\omega]^{<\aleph_0}$ such that $f(m) \not= \emptyset$ for all but finitely many $m \in A_\gamma$, and $f \leq_{A_{\gamma}} f_\alpha$ for each $\alpha < \lambda$. Let $\hat{a} = [f]_{\mathcal{U}}$; then $\hat{a}$ is nonempty, so any $\hat{m} \in \hat{a}$ is as desired.
\end{proof}

\bibliography{mybib}

\end{document}